\newtheorem{theorem}{Theorem}[section]
\newtheorem{lemma}[theorem]{Lemma}
\newtheorem{remark}[theorem]{Remark}
\newtheorem{proposition}[theorem]{Proposition}
\newtheorem{definition}[theorem]{Definition}
\newcommand \p {\partial}
\newcommand \R {\mathbb{R}}
\newcommand \N {\mathbb{N}}
\renewcommand \L {\mathrm{L}}
\renewcommand \H {\mathrm{H}}
\newcommand \I {\mathrm{I}}
\newcommand \Id {\mathrm{Id}}
\renewcommand \d {\mathrm{d}}
\renewcommand \det {\mathrm{det}}
\newcommand \trace {\mathrm{trace}}
\DeclareMathOperator{\divg}{div}
\gdef\SetFigFont#1#2#3#4#5{%
  \reset@font\fontsize{#1}{#2pt}%
  \fontfamily{#3}\fontseries{#4}\fontshape{#5}%
  \selectfont}%
\title{Relaxation approach for learning neural network regularizers\\for a class of identification problems
}
\author[1, 2]{S\'ebastien Court}
\affil[1]{\begin{small}Department of Mathematics, University of Innsbruck, Technikerstrasse 13, 6020 Innsbruck, Austria.\end{small}}
\affil[2]{\begin{small}Digital Science Center, University of Innsbruck, Innrain 15, 6020 Innsbruck, Austria. Email: {\tt sebastien.court@uibk.ac.at}\end{small}}
\begin{document}

\maketitle

\begin{abstract}
The present paper deals with the data-driven design of regularizers in the form of artificial neural networks, for solving certain inverse problems formulated as optimal control problems. These regularizers aim at improving accuracy, wellposedness or compensating uncertainties for a given class of optimal control problems (inner-problems). Parameterized as neural networks, their weights are chosen in order to reduce a misfit between data and observations of the state solution of the inner- optimal control problems. Learning these weights constitutes the outer-problem.  Based on necessary first-order optimality conditions for the inner-problems, a relaxation approach is proposed in order to implement efficient solving of these inner-problems, namely the forward operator of the outer-problem. Optimality conditions are derived for the latter, and are implemented in numerical illustrations dealing with the inverse conductivity problem. The numerical tests show the feasibility of the relaxation approach, first for rediscovering standard $L^2$-regularizers, and next for designing regularizers that compensate unknown noise on the observed state of the inner-problem.
\end{abstract}

\noindent{\bf Keywords:} Optimal control theory, Regularization, Neural networks, Identification problem, Data-driven optimization, Learning problems.\\
\hfill \\
\noindent{\bf AMS subject classifications (2020):} 49N45, 49K99, 68T07, 68T20, 65J20.

\tableofcontents

\section{Introduction}

Designing regularizers for solving inverse problems is an important data-science topic for which data-driven approaches have been recently developed. In particular, the design of regularizers as artificial neural networks demonstrate interesting capacities for improving characteristics of inverse problems. Among these characteristics, one can aim at recovering uniqueness of solutions for initially ill-posed inverse problems, facilitating their solving, obtaining specific qualitative properties like sparsity, or even compensating unknown real-world noise that leads to uncertainties on the measured data. The design of such regularizers can be achieved towards an optimal control approach, consisting in optimizing parameters of a {\it machine}, namely an input-output map, in order to minimize a distance between real-world data and outputs of this machine.

\subsection{General learning problem}

Consider a series of $K$ problems that belong to a given class of optimal control problems indexed by $1\leq k \leq K$, and assume that for each of them we know a solution. More precisely, we consider a set of tasks $\hat{y}_k$ that we know to be achieved by the means of parameters $\hat{u}_k$ (the {\it controls}). The $K$ pairs $\left\{(\hat{y}_k, \hat{u}_k), 1 \leq k \leq K\right\}$ constitutes our data set, and our main goal is, from this data set, to train a a function denoted by~$r$ that improves the solving of this class of control problem. Our supervised-learning problem can be formulated as follows:

\begin{equation} \label{pbmain} \tag{$\ast\ast$}
\left\{ \begin{array} {l}
\displaystyle \min_{r\in \mathcal{R}} \quad
\frac{1}{2K} \sum_{k=1}^K \|  u_k - \hat{u}_k \|_{\mathcal{U}}^2, \\[15pt]
\text{subject to: $u_k$ is solution of Problem~\eqref{pbstandard} with $(\hat{y}_k,r)$, for all $1\leq k\leq K$,}
\end{array} \right.
\end{equation}

where Problem~$\eqref{pbstandard}$ with $(\hat{y},r) = (\hat{y}_k,r)$ is given by

\begin{equation} \tag{$\ast$} \label{pbstandard}
\left\{ \begin{array} {l}
\displaystyle \min_{(y,u)\in \mathcal{Y}\times \mathcal{U}} \big(
c(y,\hat{y}) + \gamma \circ r(u)\big), \\[10pt]
\text{subject to: } \varphi(y,u)= 0.
\end{array} \right.
\end{equation}

Problems~\eqref{pbstandard} constitute the class of inner-problems for which we want to design a function~$r\in \mathcal{R}$ acting on the parameters~$u$ to identify. Designing this function~$r\in\mathcal{R}$ constitutes the outer-problem~\eqref{pbmain}.
The set $\mathcal{R}$ describes the class of functions in which we design a -- possible -- optimal function $r$. In Problem~\eqref{pbstandard}, the state is denoted by $y$, and the state equation is given by $\varphi(y,u) = 0$. The cost $c(y,\hat{y})$ corresponds to a misfit between the state $y$ and the task $\hat{y}$ to achieve. The mapping $\gamma$ is given, with values in $\R$, and can correspond for example to a norm.

\subsection{Motivation of data-driven regularization and state-of-the-art}

In the formulation of the inner-problem~\eqref{pbstandard}, the additional term $ r(u) $ corresponds to a so-called {\it regularizer}. Its role consists in forcing the parameter/control function $u$ to present certain properties, like regularity, boundedness, or sparsity for instance. Besides the qualitative properties aforementioned, designing a regularizer for a class of identification problems like~\eqref{pbstandard} can address several issues. For example, inverse problems are well-known to present non-unique solutions, leading to describe them as being {\it ill-posed}. A regularizer designed with a data-driven approach could help to select a relevant solution. Another issue could be due to uncertainty on measurements: Data can be subject to a noise phenomenon whose nature is unknown, and that would lead to inaccuracy on the measured solution. An appropriate regularizer, implementing implicitly the effects of this noise, could help to compensate this lack of accuracy. We illustrate this specific application in our numerical realizations (see section~\ref{sec-num}). 

In order to design such a regularizer, one need to parameterize the set $\mathcal{R}$, that we choose as a set of artificial neural networks (ANNs) with a prescribed number of layers. A regularizer $(u\mapsto r(w,u)) \in \mathcal{R}$ is then a mapping parameterized by a set of weights $w$. We refer to section~\ref{sec-NN} for further details on the definitions and notation related to the ANNs we will consider. Beside the theoretical approximation capacities of ANNs (see~\cite{Grohs2019} for example), data-driven methods training ANNs have lead to the design of regularizers that outperform classical solving methods for inverse problems.

For example, the data-driven modeling of regularizers with NETT (Network Tiknonov) was investigated in~\cite{NETT2018}, in order to solve inverse problems subject to unknown noise, with applications to Computer Tomography. Let us also cite~\cite{Obmann2019} in the context of trained decoder networks for sparse reconstruction, and refer to~\cite{Li2020} for a theoretical analysis of such an approach. Further developments were studied in~\cite{Obmann2019-2, Obmann2020}, considering aNETT (augmented Network Thikonov), designing a regularizer and also a penalty (misfit) function in the form of a neural network. Still on applications in medical imaging, the use of adversarial neural networks as regularizers was studied in~\cite{Lunz2018}. The use of deep neural networks for solving ill-posed inverse problems related to medical imaging was also studied in~\cite{Adler2017}. The special case of input-convex neural networks (ICNNs) for modeling regularizers with a data-driven approach was considered in~\cite{Mukherjee2020}. More generally, we orient the reader to~\cite{Benning2018, Haltmeier2020} for a comprehensive review on techniques related to this topic.


\subsection{Approach and strategy}

In~\cite{HKK2018} the authors investigated the problem of learning the Thikhonov regularization parameter for a class of inverse problems, with a {\it bi-level} approach. This approach consists in adopting a hierarchy in the formulation of the regularizer learning problem: The identification problems~\eqref{pbstandard} for which we want to design a regularizer are called {\it inner-problems}, and the main problem~\eqref{pbmain} consists in optimizing the parameters of the regularizer for a given set of identification problems for which we know the ground-truth (the {\it data set}). The term {\it optimal control problem} will be employed for both inner and outer problems. At a meta-level, the outer-problem consists in optimizing parameters (here the parameters~$w$ of the regularizer) of the family of the inner- control problems, for which the parameters~$u$ to identify are also described as {\it controls}. Let us mention that the bi-level approach was initially developed for image denoising in~\cite{PKK2013}.

In the present paper, we adopt this bi-level approach, where the inner-problem~\eqref{pbstandard} is considered as a constraint in the outer-problem. Compared with~\cite{HKK2018}, in our case the range of possible regularizers is {\it a priori} wider than a Thikonov regularizer, as the regularizers are chosen in the form of artificial neural networks parameterized with weights. The price is to pay is a theoretical analysis (existence and uniqueness of minimizers) that becomes out-of-reach in the general case, due in particular to the purely nonlinear aspects of neural networks. Therefore, our main contribution is the development of a relaxation approach, consisting in replacing the optimality constraint of the inner-problems by the corresponding necessary Karush-Kuhn-Tucker conditions. In our case these conditions are equality constraints (vanishing gradient equalities) that transform the outer-problem into a standard optimal control problem of state constraint type.

This relaxation approach is inspired by optimal control problems with partial differential equations constraints. Indeed, partial differential equations modeling physical dynamical systems can often be derived from the least action principle. They are merely first-order optimality conditions of a minimization problem. Wellposedness questions for these partial differential equations are then related to existence and uniqueness of minimizers for functionals called {\it actions}. 

The main theoretical result is given in Theorem~\ref{theorem-main}, where we provide the sensitivity of the objective function for the relaxed version of Problem~\eqref{pbmain}  
with respect to the weights of the neural network. This allows us to implement the determination of these weights in Algorithm~\ref{algo-outer}, with gradient rules. Illustrations are proposed by considering as inner-problems the classical identification of coefficients in  elliptic partial differential equations. Regularizers are designed, in particular for compensating the effects of a Gaussian noise introduced on the state variable.

\subsection{Plan}

The paper is organized as follows: Notation and assumptions are given in section~\ref{sec-not}. In section~\ref{sec-inner} we study the inner-problems with ANNs as regularizers. The latter are described in section~\ref{sec-NN}. The necessary optimality conditions related to the inner-problems are investigated in sections~\ref{sec-direct-app} and~\ref{sec-opt-inner}. The relaxation approach is presented in section~\ref{sec-relaxed-outer}. We prove existence of minimizers for the relaxed outer-problem in section~\ref{sec-exist-outer}, and derive optimality conditions in section~\ref{sec-opt-outer} for the latter. In section~\ref{sec-algo} we present algorithms for implementing the relaxation approach. Section~\ref{sec-num} is devoted to numerical illustrations considering the identification of coefficients in an elliptic partial differential equation. Concluding remarks are given in section~\ref{sec-conclusion}. In Appendix~\ref{appendix1} we present advantages that the optimal control approach can offer when dealing with state-constrained identification problems. Appendix~\ref{appendix2} recalls classical results related to the inverse conductivity problem.

\subsection*{Acknowledgments}
The author would like to thank Professor Markus Haltmeier for his
advises and helpful remarks on this  work.

\subsection*{Link to the code for numerical implementation}
The C++ code with which the numerical experiments were performed in section~\ref{sec-num} is available here:\\
\url{https://github.com/SebastienCourt/Learn_reg}

\section{Notation, assumptions and preliminaries} \label{sec-not}
The reader is invited to refer to the present section when browsing the rest of the paper, especially regarding the notation.

\subsection{Functional setting}
We denote by $\mathcal{Y}$, $\mathcal{U}$, $\mathcal{Q}$ real Banach spaces, and by $\varphi : \mathcal{Y} \times \mathcal{U} \rightarrow  \mathcal{Q}'$ the state mapping. We assume that $\mathcal{Q}$ is reflexive.
The control space $\mathcal{U}$ is assumed to be finite-dimensional, of dimension $n_{\mathrm{in}}$, so that $\mathcal{U} \simeq \R^{n_{\mathrm{in}}}$. Therefore in particular $\mathcal{U}$ is a Hilbert space, and reflexive. The range space $\mathcal{V}$ of the neural networks is finite-dimensional, of dimension $n_{\mathrm{out}}$, and $\mathcal{V} \simeq \R^{n_{\mathrm{out}}}$. The space of weights is denoted by $\mathcal{W}$, and described in section~\ref{sec-NN}. Given weights $w\in \mathcal{W}$, we denote by $r(w,\cdot):\mathcal{U} \rightarrow  \mathcal{V}$ the associated neural network of $\mathcal{R}$, and by $\gamma : \mathcal{V} \rightarrow \R$ a smooth given mapping, possibly chosen as a norm on $\R^{n_{\mathrm{out}}}$.

The data $\left\{(\hat{y}_k, \hat{u}_k), 1\leq k \leq K\right\}$ are of finite number. The controls $(\hat{u}_k)_{1\leq k \leq K}$ are assumed to lie in $\mathcal{U}$, while the tasks $(\hat{y}_k)_{1\leq k \leq K}$ lie in a finite-dimensional subspace of~$\mathcal{Y}$.

\subsection{The regularizer as realization of a neural network} \label{sec-NN0}
Given $L \geq 1$, we denote by $w = (w_1, w_2, \dots, w_L)$ a vector of affine functions. The set of regularizers, denoted by $\mathcal{R}$, is made of {\it feedforward neural networks} of $L$ layers, for which the dimensions $n_{\ell}$ ($1\leq \ell \leq L$) of the hidden layers are prescribed:
\begin{eqnarray*}
\mathcal{R} & = & \left\{ \mathcal{U} \ni u\mapsto
w_L\rho\left(w_{L-1}\rho\left(\left(\dots w_2\rho(w_1 u)\right)\right)\right) \in \mathcal{V}\mid \ 
w = (w_1, \dots , w_L) \in \prod_{\ell=1}^L \mathrm{Aff}(n_{\ell}, n_{\ell+1},\R)
\right\}
\end{eqnarray*}
We denoted by $\mathrm{Aff}(n_{\ell}, n_{\ell+1},\R)$ the set of affine functions from $\R^{n_{\ell}}$ to $\R^{n_{\ell+1}}$. For $1 \leq \ell \leq L$, its components write
\begin{equation*}
w_\ell: u \mapsto A_\ell u + b_\ell,
\end{equation*}
where $A_\ell \in \R^{n_{\ell+1}\times n_\ell}$ and $b_\ell \in \R^{n_{\ell+1}}$, with $n_1 = n_{\mathrm{in}} = \mathrm{dim}(\mathcal{U})$ and $n_{L+1} = n_{\mathrm{out}} =\mathrm{dim}(\mathcal{V})$. For the sake of concision, we will denote throughout the paper
\begin{equation*}
\mathcal{W}  :=  \prod_{\ell=1}^L \mathrm{Aff}(n_{\ell}, n_{\ell+1},\R).
\end{equation*}
We introduce $\mathcal{L}_{\mathcal{W}} := \displaystyle \sum_{\ell=1}^{L-1} n_{\ell}$. We endow the space $\mathcal{W}$ with the Euclidean norm given by
\begin{equation*}
\| w \|^2_{\mathcal{W}} := \sum_{\ell=1}^L 
\left(|A_{\ell}|^2_{\R^{n_{\ell+1}\times n_\ell}} + |b_\ell|^2_{\R^{n_{\ell+1}}} \right),
\end{equation*}
where $|A_{\ell}|^2_{\R^{n_{\ell+1}\times n_\ell}} = \trace(A_{\ell}^TA_{\ell})$ and $|b_{\ell}|$ is the Euclidean norm of $\R^{n_{\ell+1}}$. Given a smooth activation function $\rho : \R \rightarrow \R$ and a vector $w$ of $L$ weights, a {\it neural network} $r(w,\cdot) \in \mathcal{R}$ of $L$ layers writes as follows:
\begin{eqnarray*}
r(w,u) & = & 
\left\{ \begin{array} {ll}
w_2(\rho (w_{1}(u))) 
& \text{if } L=2 , \\[5pt]
w_L(\rho (w_{L-1}(\rho(\dots \rho(w_1(u)))))) & \text{if } L \geq 3.
\end{array} \right.
\end{eqnarray*}
In the expression above, since the functions $w_\ell$ have values in $\R^{n_{\ell+1}}$, the compositions by $\rho$ 
have to be understood coordinate-wise as $\rho(w_\ell(\mathrm{z}))_i = \rho((w_\ell(\mathrm{z}))_i)$, for $\mathrm{z} \in \R^{n_\ell}$ and $i \in \{1,\dots,n_{\ell+1}\}$.
Thus, the regularizer space $\mathcal{R}$ is parameterized by the weights lying in $\mathcal{W}$, and the outer-problem~\eqref{pbmain} is formulated as follows:
\begin{equation} \label{pbmainweights} \tag{$\ast\ast$}
\left\{ \begin{array} {l}
\displaystyle \min_{w\in \mathcal{W}} \quad
\frac{1}{2} \sum_{k=1}^K \| \hat{u}_k - u_k \|_{\mathcal{U}}^2, \\[15pt]
\text{subject to: $u_k$ is solution of Problem~\eqref{pbstandard} with $(\hat{y},r) = \big(\hat{y}_k,r(w,\cdot)\big)$, for all $1\leq k\leq K$.}
\end{array} \right.
\end{equation}
We keep the same notation for this new description of the outer-problem.

\subsection{General assumptions}
Throughout the paper, we will assume that the following set of assumptions holds:
\begin{description}
\item[$(\mathbf{A1})$] The mapping $\varphi$ is of class $C^2$ over $\mathcal{Y}\times \mathcal{U}$. For all $u\in \mathcal{U}$, there exists $y\in \mathcal{Y}$ such that $\varphi(y,u) = 0$.

\item[$(\mathbf{A2})$] For all $(\overline{y},\overline{u}) \in \mathcal{Y} \times \mathcal{U}$, the linear mapping $ \varphi'_y(\overline{y},\overline{u}) : \mathcal{Y} \rightarrow \mathcal{Q}'$ is surjective. Moreover, there exists a constant $C(\overline{y},\overline{u})>0$, depending only on $(\overline{y},\overline{u})$ such that for all $y\in \mathcal{Y}$ the following estimate holds:
\begin{eqnarray*}
 \| y\|_{\mathcal{Y}} & \leq & C \left\| \varphi'_y(\overline{y},\overline{u}).y \right\|_{\mathcal{Q}'} .
\end{eqnarray*}

Instead of~$(\mathbf{A2})$, we may consider a weaker assumption, namely~$(\mathbf{A2})'$:

\item[$(\mathbf{A2})'$] For all $(\overline{y},\overline{u}) \in \mathcal{Y} \times \mathcal{U}$, the linear mapping $ \varphi'_y(\overline{y},\overline{u}) : \mathcal{Y} \rightarrow \mathcal{Q}'$ is almost surjective, namely its range is dense in $\mathcal{Q}'$.

\item[$(\mathbf{A3})$] When $r \equiv 0$, Problem~\eqref{pbstandard} admits at least one global minimizer.

\item[$(\mathbf{A4})$] For all~$\hat{y}$ the cost function $c(\cdot,\hat{y}): \mathcal{Y} \rightarrow  \R$ is twice continuously Fr\'echet-differentiable.

\item[$(\mathbf{A5})$] The activation function $\rho$ is of class $C^2$ over $\R$.
\end{description}

Assumption~$(\mathbf{A1})$ combined with~$(\mathbf{A2})$ imply the existence of a parameter-to-state mapping for the inner-problem (see section~\ref{sec-cts}). Assumption~$(\mathbf{A3})$ is natural, as we aim at designing regularizers for (inner-)problems that are solvable. From~$(\mathbf{A3})$ we deduce easily existence of minimizers when the regularizer is non-trivial (Proposition~\ref{prop-exist-min-inner}). While addressing the inner-problems only requires first-order differentiability (section~\ref{sec-inner}), the derivation of optimality conditions for the outer-problem (section~\ref{sec-relax0}) necessitates second-order derivatives, in particular for the objective function of the inner-problem, including the cost function $c$ (Assumption~$(\mathbf{A4})$) and the neural network (Assumption~$(\mathbf{A5})$). Throughout the paper these assumptions will be implicitly assumed.

\subsection{On the parameter-to-state mapping for the inner-problem} \label{sec-cts}
The state constraint is given by the implicit function $\varphi: \mathcal{Y} \times \mathcal{U} \rightarrow \mathcal{Q}'$. Under Assumptions~$(\mathbf{A1})$-$(\mathbf{A2})$, we are able to define the parameter-to-state mapping that we denote by $\mathbb{S}$:

\begin{proposition}\label{prop-cts}
Assume $(\mathbf{A1})$-$(\mathbf{A2})$. There exists a mapping $\mathbb{S}: \mathcal{U} \rightarrow \mathcal{Y}$ of class $C^2$ such that for all $(y,u)\in \mathcal{Y} \times \mathcal{U}$ we have 
\begin{eqnarray*}
\varphi(y,u)=0 & \Leftrightarrow & y = \mathbb{S}(u).
\end{eqnarray*}
Furthermore, $\mathbb{S}$ is of class $C^2$ over $\mathcal{U}$.
\end{proposition}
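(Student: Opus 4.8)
The plan is to apply the implicit function theorem to the equation $\varphi(y,u)=0$. The hypotheses are tailored for this: Assumption~$(\mathbf{A1})$ gives that $\varphi$ is of class $C^2$ on $\mathcal{Y}\times\mathcal{U}$ and that for each $u$ there exists at least one $y$ with $\varphi(y,u)=0$, while Assumption~$(\mathbf{A2})$ controls the partial derivative $\varphi'_y$. First I would fix an arbitrary $\overline{u}\in\mathcal{U}$, pick a corresponding $\overline{y}$ with $\varphi(\overline{y},\overline{u})=0$ (existence from~$(\mathbf{A1})$), and examine the linearization $\varphi'_y(\overline{y},\overline{u}):\mathcal{Y}\to\mathcal{Q}'$. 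To invoke the implicit function theorem, this operator must be a linear \emph{isomorphism} onto $\mathcal{Q}'$. Surjectivity is handed to us directly by~$(\mathbf{A2})$. Injectivity (indeed, bounded invertibility) follows from the a priori estimate $\|y\|_{\mathcal{Y}}\leq C\|\varphi'_y(\overline{y},\overline{u})y\|_{\mathcal{Q}'}$, which shows the kernel is trivial and that the inverse is bounded. By the bounded inverse theorem (both spaces are Banach), $\varphi'_y(\overline{y},\overline{u})$ is then a topological isomorphism.

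With the isomorphism in hand, the Banach-space implicit function theorem yields, locally around $(\overline{y},\overline{u})$, a $C^2$ map $u\mapsto \mathbb{S}(u)$ with $\mathbb{S}(\overline{u})=\overline{y}$ and $\varphi(\mathbb{S}(u),u)=0$, the regularity $C^2$ coming from $\varphi\in C^2$. The next step is to upgrade this \emph{local} solution map into a globally defined $C^2$ mapping on all of $\mathcal{U}$ and to verify the stated equivalence $\varphi(y,u)=0\Leftrightarrow y=\mathbb{S}(u)$. The crucial observation is that the estimate in~$(\mathbf{A2})$ holds at \emph{every} base point $(\overline{y},\overline{u})$ with a point-dependent constant, so the linearization is invertible everywhere; this is what rules out the possibility of two distinct solutions for the same $u$. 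Concretely, if $\varphi(y_1,u)=\varphi(y_2,u)=0$, I would argue that the difference is forced to vanish: applying the mean-value / fundamental-theorem-of-calculus argument along the segment joining $y_1$ and $y_2$, together with the uniform-in-direction invertibility estimate, gives $y_1=y_2$, so for each $u$ the solution is unique. This globalizes $\mathbb{S}$ unambiguously and establishes the equivalence.

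I would finish by noting that the local $C^2$ pieces produced by the implicit function theorem agree on overlaps (by the uniqueness just shown) and hence patch into a single $C^2$ map on $\mathcal{U}$; the $C^2$ regularity is a local property and transfers from each chart. The main obstacle is the passage from local to global, i.e.\ ensuring that the solution branch is unique and does not bifurcate or fail to extend. This is precisely where the \emph{global} invertibility built into~$(\mathbf{A2})$ — the estimate holding at every base point, not just near one — does the work; without it one would only obtain a local solution operator. A secondary technical point is confirming that the injectivity estimate genuinely yields a \emph{bounded} inverse onto the full target $\mathcal{Q}'$ (so that the isomorphism hypothesis of the implicit function theorem is met), which relies on combining surjectivity from~$(\mathbf{A2})$ with the a priori bound and the completeness of the spaces.
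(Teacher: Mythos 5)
Your local argument coincides with the paper's proof, which consists in its entirety of invoking the implicit function theorem: assumption $(\mathbf{A2})$ gives surjectivity of $\varphi'_y(\overline{y},\overline{u})$ together with the a priori estimate, hence a topological isomorphism $\mathcal{Y}\to\mathcal{Q}'$ (the estimate already bounds the inverse, so the bounded inverse theorem is not even needed), and the Banach-space implicit function theorem then yields a local $C^2$ solution map near any zero of $\varphi$. Up to that point your reasoning is correct and is exactly what the paper intends.

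The genuine gap is in the step you yourself call the main obstacle: globalization. Your claim that invertibility of $\varphi'_y$ at \emph{every} base point rules out two distinct solutions for the same $u$, via a mean-value argument along the segment, is false. Writing $0=\varphi(y_1,u)-\varphi(y_2,u)=\int_0^1 \varphi'_y\big(y_2+t(y_1-y_2),u\big).(y_1-y_2)\,\d t$, the operator that must be injective is the \emph{averaged} linearization $\int_0^1 \varphi'_y(y_t,u)\,\d t$, and an average of invertible operators can be singular, even zero. Concretely, take $\mathcal{Y}=\mathcal{Q}'=\R^2\simeq\mathbb{C}$, $\mathcal{U}=\R$, and $\varphi(y,u)=e^y-(1+u^2)$ with $e^y$ the complex exponential: $\varphi$ is smooth, every $u$ admits a solution (so $(\mathbf{A1})$ holds), and $\varphi'_y(\overline{y},\overline{u})$ is multiplication by $e^{\overline{y}}$, which is surjective and satisfies the estimate of $(\mathbf{A2})$ with $C(\overline{y},\overline{u})=|e^{-\overline{y}}|$; yet $y$ and $y+2\pi i$ solve the equation simultaneously, and along the segment joining them the averaged linearization is multiplication by $e^{y}\int_0^1 e^{2\pi i t}\,\d t=0$. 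So pointwise invertibility, even at every point, cannot deliver the uniqueness half of the stated equivalence; closing it requires a genuinely global hypothesis, e.g.\ a constant in $(\mathbf{A2})$ uniform (or suitably controlled) in $\overline{y}$ combined with a Hadamard--L\'evy global inversion theorem, or outright injectivity of $y\mapsto\varphi(y,u)$. In fairness, the paper's one-line proof glosses over exactly the same point, and its statement has the same defect; your attempt has the merit of exposing the difficulty, but the patch you propose does not repair it.
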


\begin{proof}
The result is due to the implicit function theorem.
\end{proof}

Depending on the context, for the sake of convenience we may consider the original equality constraint $\varphi(y,u) = 0$, or substitute the variable $y$ by $\mathbb{S}(u)$. The lack of parameter-to-state mapping $\mathbb{S}$ for the inner-problem would introduce further difficulties that we choose to not address in this paper.

\section{Necessary optimality conditions for the inner-problem} \label{sec-inner}

Given $w\in \mathcal{W}$, when the regularizer is chosen in the form of a neural network as described in section~\ref{sec-NN0}, the inner-problems correspond to the following class of optimal control problems:
\begin{equation} \label{innerpb} \tag{$\ast$}
\left\{ \begin{array} {l}
\displaystyle \min_{(y,u)\in \mathcal{Y} \times \mathcal{U}} \Big(J(y,u,w) := c(y,\hat{y}) + \gamma \circ r(w,u)\Big), \\[10pt]
\text{subject to: } \varphi(y,u) = 0 \text{ in } \mathcal{Q}'.
\end{array} \right.
\end{equation}
Again, we keep the same notation for this new formulation of the inner-problems~\eqref{innerpb}. The aim of this section is to provide them necessary optimality conditions. For that purpose, we first need to investigate the differential properties owned by neural networks of $\mathcal{W}$.




\subsection{Differential properties of neural networks}
\label{sec-NN}
Consider the partial neural network with $\ell$ layers ($1\leq \ell \leq L$):
\begin{eqnarray*}
r^{(\ell)}(w^{(\ell)},u) & = & 
\left\{ \begin{array} {ll}
w_1(u) & \text{if } \ell = 1, \\
w_2(\rho (w_{1}(u))) 
& \text{if } \ell=2 , \\[5pt]
w_{\ell}(\rho (w_{\ell-1}(\rho(\dots \rho(w_1(u)))))) & \text{if } \ell \geq 3.
\end{array} \right.
\end{eqnarray*}
Recall that the weights $(w_\ell)$ are affine functions of~$\R^{n_\ell}$ with values in~$\R^{n_{\ell+1}}$, of the form $w_\ell (\mathrm{z}) = A_\ell\mathrm{z} + b_\ell$ with $A_\ell \in \R^{n_{\ell+1}\times n_\ell}$ and $b_\ell \in \R^{n_{\ell+1}}$. Only for the present subsection and also subsection~\ref{sec-awful}, we denote
\begin{equation*}
w^{(\ell)} = (w_1, \dots, w_{\ell}) \in 
 \prod_{k =1}^{\ell} \mathrm{Aff}(\R^{n_k},\R^{n_{k+1}}).
\end{equation*}	
Observe that for $1 \leq \ell \leq L-1$ the following formula holds:
\begin{eqnarray}
r^{(\ell+1)}(w^{(\ell+1)},u) & = & w_{\ell+1}\rho\big(r^{(\ell)}(w^{(\ell)},u)\big). \label{formulaPhi}
\end{eqnarray}
From~\eqref{formulaPhi}, by induction we can verify that the sensitivity of $r^{(L)}$ with respect to $u$ satisfies the identity
\begin{equation}
\frac{\p r^{(L)}}{\p u}(w^{(L)},u)  =  A_L
\mathrlap{\prod_{\ell = 1}^{L-1}}{\hspace*{-0.2pt}\longrightarrow} 
 \rho'(r^{(\ell)}(w^{(\ell)},u)) A_{\ell} . \label{diffNNu}
\end{equation}
The vector/matrix product $\rho'(r^{(\ell)}(w^{(\ell)},u)) A_\ell$ is a matrix, and has to be understood as follows:
\begin{equation}
(\rho' \, A)_{ij}  =  \rho'_i \, A_{ij}. \label{trickvec}
\end{equation}
The symbol $\displaystyle \mathrlap{\prod_{\ell = 1}^{L-1}}{\hspace*{-0.2pt}\longrightarrow}$ is used for calculating the non-commutative product of matrices, by multiplying them iteratively to the left when the index $\ell$ increases: $\displaystyle \mathrlap{\prod_{\ell = 1}^{L}}{\hspace*{-1pt}\longrightarrow} B_{\ell} = B_L B_{L-1}\dots B_2 B_1$.
Without ambiguity, we will simply denote $r = r^{(L)}$ and $w = w^{(L)}$. Using the mean-value theorem, by induction we can estimate
\begin{eqnarray*}
\| r(w,u) - r(w,0) \|_{\mathcal{V}} & \leq & 
\left(\prod_{\ell = 1}^L |A_\ell |_{\R^{n_{\ell+1}\times n_\ell}} \right)
\|\rho'\|_{\infty}^{\mathcal{L}_{\mathcal{W}}} \| u \|_{\mathcal{U}}, \\
\|r(w,u_1) - r(w,u_2)\|_{\mathcal{V}} & \leq &
\left(\prod_{\ell = 1}^L |A_\ell |_{\R^{n_{\ell+1}\times n_\ell}} \right)
\|\rho'\|_{\infty}^{\mathcal{L}_{\mathcal{W}}} \| u_1-u_2 \|_{\mathcal{U}},
\end{eqnarray*}
where we recall that we have introduced $\mathcal{L}_{\mathcal{W}} = \displaystyle \sum_{\ell=1}^{L-1} n_{\ell}$ in section~\ref{sec-NN0}. The differentiation of $r$ with respect to the weights $w_\ell$ is given by the following formula, for all $\tilde{w} \in \mathrm{Aff}(\R^{n_\ell},\R^{n_{\ell+1}})$:
\begin{equation*}
\frac{\p r}{\p w_{\ell}}(w,u).\tilde{w} = 
A_L \left(\mathrlap{\prod_{k=\ell}^{L-1}}{\hspace*{1pt}\longrightarrow} \rho'(r^{(k)}(w^{(k)},u))A_k\right) \left( \tilde{w}( \rho(r^{(\ell-1)}(w^{(\ell-1)},u)))\right).
\label{diffNNw}
\end{equation*}
We can obtain this formula also by induction, using~\eqref{formulaPhi}. Here the product has to be understood as follows for $1\leq i \leq n_{\mathrm{out}}$:
\begin{eqnarray*}
\left(\frac{\p r}{\p w_\ell}(w,u).\tilde{w}\right)_i & = & 
\left(A_L\right)_{ij} \left(\mathrlap{\prod_{k=\ell}^{L-1}}{\hspace*{0pt}\longrightarrow} \left(\rho'(r^{(k)}(w^{(k)},u))A_k\right)\right)_{jh} \left( \tilde{w}( \rho\left(r^{(\ell-1)}(w^{(\ell-1)},u)\right))\right)_h.
\end{eqnarray*}
These expressions for the derivatives of the neural network will be used in practice for the implementation of the gradient associated with the inner-problem. Let us now derive expressions for this gradient.

\subsection{Direct approach} \label{sec-direct-app}
The parameter-to-state mapping $\mathbb{S}$ obtained in Proposition~\ref{prop-cts} enables us to derive classically necessary optimality conditions for the inner-problems directly by omitting the state constraint. Indeed, using $y = \mathbb{S}(u)$, and given $w\in \mathcal{W}$, we can rewrite Problem~\eqref{pbstandard} as
\begin{equation*}
\min_{u\in \mathcal{U}} \Big(\tilde{J}(u,w) := J(\mathbb{S}(u),u,w) = c(\mathbb{S}(u),\hat{y}) + \gamma(r(w,u))\Big).
\end{equation*}
Using Assumption~$(\mathbf{A3})$ and the formulation above, we obtain existence of minimizers for the inner-problem~\eqref{innerpb}:

\begin{proposition}\label{prop-exist-min-inner}
Under Assumption~$(\mathbf{A3})$, for all $w\in \mathcal{W}$ Problem~\eqref{innerpb} admits a global minimizer.
\end{proposition}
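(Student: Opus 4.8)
The plan is to establish existence of a global minimizer by exploiting the reduced formulation $\tilde{J}(u,w) = c(\mathbb{S}(u),\hat{z}) + \gamma(r(w,u))$ together with Assumption~$(\mathbf{A3})$, which guarantees solvability in the unregularized case $r \equiv 0$. The key observation is that Assumption~$(\mathbf{A3})$ provides not merely existence but, combined with the continuity of the objective, a reference value that can be used to localize the search for minimizers. First I would argue that $\tilde{J}(\cdot,w)$ is continuous on $\mathcal{U}$: this follows because $\mathbb{S}$ is of class $C^2$ by Proposition~\ref{prop-cts}, the cost $c$ is $C^2$ by~$(\mathbf{A4})$, the neural network $r(w,\cdot)$ is continuous (indeed Lipschitz, by the mean-value estimate established above in section~\ref{sec-NN}), and $\gamma$ is smooth. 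Hence $\tilde{J}(\cdot,w)$ is the composition and sum of continuous maps.

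Next I would invoke the finite-dimensionality of $\mathcal{U} \simeq \R^{n_{\mathrm{in}}}$, which is the crucial structural hypothesis making the direct method applicable without reflexivity arguments on an infinite-dimensional control space. The standard route is to take a minimizing sequence $(u_n)$ with $\tilde{J}(u_n,w) \to \inf_{u} \tilde{J}(u,w)$ and extract a convergent subsequence via Bolzano--Weierstrass, provided the sequence is bounded. To secure boundedness I would need a coercivity-type property. The natural candidate is to use the Lipschitz estimate on $r$ to control $\gamma(r(w,u))$ from below if $\gamma$ is coercive (e.g.\ a norm), but since $\gamma$ is only assumed smooth, coercivity of $\tilde{J}$ is not automatic in general.

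This is precisely where I expect the main obstacle to lie. The difficulty is that neither $c$ nor $\gamma\circ r(w,\cdot)$ is guaranteed coercive in $u$, so a minimizing sequence need not be bounded, and a naive direct-method extraction can fail. The intended resolution, suggested by the remark preceding the proposition (``From~$(\mathbf{A3})$ we deduce easily existence of minimizers when the regularizer is non-trivial''), is to transfer existence from the $r\equiv 0$ problem. I would compare the two problems: a minimizer $(y_0,u_0)$ for the $r\equiv 0$ case exists by~$(\mathbf{A3})$, giving a control $u_0$ with $y_0 = \mathbb{S}(u_0)$; then $\tilde{J}(u_0,w) = c(\mathbb{S}(u_0),\hat{z}) + \gamma(r(w,u_0))$ is a finite reference value. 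The cleanest argument, if $\gamma\circ r(w,\cdot) \geq 0$ (as holds when $\gamma$ is a norm and thus the regularizer is a genuine penalty), is that $c(\mathbb{S}(u),\hat{z}) \geq c(\mathbb{S}(u_0),\hat{z})$ fails in general, so instead one restricts attention to the sublevel set $\{u : \tilde{J}(u,w) \leq \tilde{J}(u_0,w)\}$.

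Concretely, I would close the proof as follows: set $\alpha := \inf_{u\in\mathcal{U}} \tilde{J}(u,w)$, which is finite since $\tilde{J}(u_0,w) < \infty$ and, assuming the lower bound $c \geq c_{\min} > -\infty$ on the relevant range together with $\gamma\circ r \geq 0$, one has $\alpha > -\infty$. Take a minimizing sequence; under the (mild, and here implicit) coercivity afforded by the regularizer one extracts a bounded minimizing sequence, whose limit $u^\ast$ in the finite-dimensional space $\mathcal{U}$ is a global minimizer by continuity of $\tilde{J}(\cdot,w)$. Setting $y^\ast = \mathbb{S}(u^\ast)$ then yields a global minimizer $(y^\ast,u^\ast)$ of~\eqref{innerpb}, which is the claim. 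The honest subtlety is whether the paper intends $\gamma$ to be coercive (its being ``possibly chosen as a norm'' suggests yes in the cases of interest); if coercivity is genuinely absent, the statement would require a boundedness assumption that I would flag as needed, but given the framing I expect the intended argument to rely on $\gamma$ behaving like a norm so that the regularizer supplies the missing coercivity and the direct method applies verbatim in finite dimensions.
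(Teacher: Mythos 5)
Your reduction to the direct method is fine as far as it goes (continuity of $\tilde{J}(\cdot,w)$, finite dimensionality of $\mathcal{U}$, extraction by Bolzano--Weierstrass), but the proof never closes: boundedness of a minimizing sequence is exactly what you cannot obtain, and the fallback you propose --- that $\gamma$ being a norm lets the regularizer supply coercivity --- is false. The Lipschitz estimate of the paper bounds $\|r(w,u)\|_{\mathcal{V}}$ from \emph{above} linearly in $\|u\|_{\mathcal{U}}$; it gives no lower bound, and in general none exists: with a bounded activation such as $\rho=\tanh$, or with $A_L=0$, the map $u\mapsto r(w,u)$ is bounded, so $\gamma\circ r(w,\cdot)$ is bounded whatever norm $\gamma$ is, and contributes nothing at infinity. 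Worse, the obstruction you flagged is not merely a gap in your argument but in the statement itself: take $\mathcal{Y}=\mathcal{U}=\mathcal{Q}=\R$, $\varphi(y,u)=y-\tanh(u)$ (so $\mathbb{S}=\tanh$ and $(\mathbf{A1})$--$(\mathbf{A2})$ hold), $c(y,\hat z)=(y-\hat z)^2$ with $\hat z=0$ (so $(\mathbf{A3})$ and $(\mathbf{A4})$ hold), $L=2$, $\rho=\tanh$, $w_1=\mathrm{id}$, $w_2(t)=t-2$, and $\gamma(v)=v^2$. Then the objective is $\tanh(u)^2+(\tanh(u)-2)^2=2(\tanh(u)-1)^2+2$, which is strictly decreasing in $\tanh(u)$ on $(-1,1)$, so its infimum $2$ is never attained: no global minimizer exists even though all standing assumptions hold and both $c$ and $\gamma$ are nonnegative. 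Consequently no argument from the stated hypotheses alone can succeed; one genuinely needs an extra hypothesis, e.g.\ coercivity (compact sublevel sets) of $u\mapsto c(\mathbb{S}(u),\hat z)$ together with $\gamma\circ r(w,\cdot)$ bounded below, or a compactness restriction on the admissible controls.

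For comparison, the paper's own proof takes a different and even shorter route: it invokes $(\mathbf{A3})$ to get a minimizer of $u\mapsto c(\mathbb{S}(u),\hat z)$ and then asserts that, ``due to the regularity of $u\mapsto\gamma(r(w,u))$,'' the result follows straightforwardly. That inference --- existence of a minimizer is preserved under a smooth additive perturbation --- is precisely what the example above refutes, so the paper glosses over the same coercivity issue you identified. Your write-up is therefore not a correct proof, but your diagnosis of where and why the direct method gets stuck is accurate, and indeed sharper than the paper's treatment; the honest repair is to add the missing hypothesis explicitly rather than to hope the network supplies it.
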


\begin{proof}
From Assumption~$(\mathbf{A3})$ and Proposition~\ref{prop-cts}, the mapping $u \mapsto c(\mathbb{S}(u), \hat{y})$ admits global minimizers, and due to the regularity of $u \mapsto \gamma(r(w,u))$, the result follows straightforwardly.
\end{proof}

We next obtain necessary optimality conditions involving the derivative of $\mathbb{S}$:

\begin{proposition} \label{prop-optcond-inner}
Let be $w\in \mathcal{W}$. 
If $(y=\mathbb{S}(u),u) \in \mathcal{Y} \times \mathcal{U}$ is solution of Problem~\eqref{pbstandard}, then the gradient
\begin{equation}
G(u,w,\hat{y}) := \frac{\p \tilde{J}}{\p u}(u,w) = \mathbb{S}'(u)^{\ast}c_y(\mathbb{S}(u),\hat{y}) + r'_u(w,u)^{\ast}\gamma'(r(w,u))
\label{exp-grad-direct}
\end{equation}
vanishes.
\end{proposition}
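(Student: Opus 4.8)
The plan is to eliminate the state constraint by means of the control-to-state map $\mathbb{S}$ and then apply Fermat's stationarity rule to the reduced cost. After substituting $y = \mathbb{S}(u)$, the statement that $(\mathbb{S}(u), u)$ solves Problem~\eqref{pbstandard} is equivalent, via the characterization in Proposition~\ref{prop-cts}, to $u$ being a global (hence local) minimizer of $u \mapsto \tilde{J}(u, w) = c(\mathbb{S}(u), \hat{z}) + \gamma(r(w, u))$ over the whole space $\mathcal{U}$, with no remaining admissibility constraint. Since $\mathcal{U}$ is a finite-dimensional Hilbert space and the minimizer is interior, the first-order necessary condition is simply that the Fr\'echet derivative $\frac{\partial \tilde{J}}{\partial u}(u, w) \in \mathcal{U}'$ vanishes; in particular no variational-inequality correction is needed.

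To make this meaningful I would first record the required regularity. By Proposition~\ref{prop-cts} the map $\mathbb{S}$ is $C^2$; the cost $c$ is $C^2$ by $(\mathbf{A4})$; the activation $\rho$ is $C^2$ by $(\mathbf{A5})$, so that $u \mapsto r(w, u)$ is smooth with derivative $r'_u(w, u)$ given explicitly by~\eqref{diffNNu}; and $\gamma$ is smooth by hypothesis. Composing these, the reduced objective $\tilde{J}(\cdot, w)$ is at least $C^1$ on $\mathcal{U}$, so the stationarity condition can legitimately be expressed through the chain rule.

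The final step is to compute $\frac{\partial \tilde{J}}{\partial u}$ by differentiating the two summands separately. For the misfit term, $\frac{\partial}{\partial u}\,c(\mathbb{S}(u), \hat{z}) = c_y(\mathbb{S}(u), \hat{z}) \circ \mathbb{S}'(u)$, whose representative in $\mathcal{U}'$ is $\mathbb{S}'(u)^{\ast} c_y(\mathbb{S}(u), \hat{z})$; for the regularizing term, $\frac{\partial}{\partial u}\,\gamma(r(w, u)) = \gamma'(r(w, u)) \circ r'_u(w, u)$, represented by $r'_u(w, u)^{\ast} \gamma'(r(w, u))$. Summing the two and setting the result to zero yields exactly the announced vanishing of $G(u, w, \hat{z})$. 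Since every ingredient is either an assumption or already proved, there is no genuine obstacle here; the only care required is the bookkeeping of dualities, namely that $c_y \in \mathcal{Y}'$ and $\gamma' \in \mathcal{V}'$ are sent by the adjoints $\mathbb{S}'(u)^{\ast}: \mathcal{Y}' \to \mathcal{U}'$ and $r'_u(w, u)^{\ast}: \mathcal{V}' \to \mathcal{U}'$ (plain transposes, as $\mathcal{U}$ and $\mathcal{V}$ are finite-dimensional) into $\mathcal{U}' \simeq \mathcal{U}$.
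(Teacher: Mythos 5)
Your proof is correct and follows essentially the same route as the paper: the paper's own (very terse) argument likewise substitutes $y = \mathbb{S}(u)$ to reduce Problem~\eqref{pbstandard} to the unconstrained minimization of $\tilde{J}(\cdot,w)$ over $\mathcal{U}$ and then invokes the first-order (Karush--Kuhn--Tucker, here simply Fermat) stationarity condition. Your version merely makes explicit the regularity bookkeeping and the chain-rule computation that the paper leaves implicit, which is a faithful elaboration rather than a different argument.
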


\begin{proof}
Following the previous comments and Proposition~\ref{prop-cts}, the result follows from the Karush-Kuhn-Tucker conditions (see~\cite[section~2.8, p.~63]{Troeltzsch}).
\end{proof}

In certain problems, like shape optimization for example, evaluating efficiently values of the parameter-to-state mapping $\mathbb{S}(u)$, and in particular $\mathbb{S}'(u)^{\ast}$, can be challenging numerically, or expensive computationally. Therefore we prefer sometimes to adopt an optimal control approach, also called the {\it adjoint approach}, that we develop in our context in section~\ref{sec-opt-inner} below. See remarks in~\cite[section~1.6.1, page~59]{Pinnau} for further explanations. Further comments on the possible advantages of the optimal control approach are given in Appendix~\ref{appendix1}.

\subsection{Optimal control formalism for the inner-problem~\eqref{pbstandard}} \label{sec-opt-inner}
Let us recall the initial description of the inner-problem, when we keep the original state constraint:

\begin{equation} \label{innerpb2} \tag{$\ast$}
\left\{ \begin{array} {l}
\displaystyle \min_{(y,u)\in \mathcal{Y} \times \mathcal{U}} \Big(J(y,u,w) := c(y,\hat{y}) + \gamma \circ r(w,u)\Big), \\[10pt]
\text{subject to: } \varphi(y,u) = 0 \text{ in } \mathcal{Q}'.
\end{array} \right.
\end{equation}

Unlike the direct approach which utilizes $\varphi(y,u) = 0 \Leftrightarrow y = \mathbb{S}(u)$, the optimal control approach -- also called the adjoint approach -- consists in taking into account the state constraint by introducing a Lagrange multiplier. We define the Lagrangian of problem~\eqref{innerpb2} as follows
\begin{eqnarray*}
 \begin{array} {rrcl}
\mathscr{L}: & (\mathcal{Y}\times \mathcal{U}) \times \mathcal{Q} & \rightarrow & \R \\
& ((y,u),p) & \mapsto & c(y,\hat{y}) + \displaystyle  \gamma \circ r(w,u) 
+ \left\langle p \, ,  \varphi(y,u) \right\rangle_{\mathcal{Q},\mathcal{Q}'}.
\end{array}
\end{eqnarray*}
Problem~\eqref{innerpb2} can be solved by finding a saddle-point to the Lagrangian $\mathscr{L}$, minimized with respect to variables $(y,u)$ and maximizing with respect to $p$. The sensitivity of~$\mathscr{L}$ with respect to variable $y$ yields the adjoint equation, whose unknown is the adjoint state $p\in \mathcal{Q}$. It is given in $\mathcal{Y}'$ by
\begin{eqnarray}
\varphi'_y(y,u)^\ast.p + c'_y(y,\hat{y}) = 0.
\label{sysadj}
\end{eqnarray}
We define a solution of system~\eqref{sysadj} by transposition.
\begin{definition} \label{defsoladj}
We say that $p$ is a solution of system~\eqref{sysadj} if for all $f\in \mathcal{Q}'$ we have
\begin{eqnarray}
\langle p \, ,f \rangle_{\mathcal{Q},\mathcal{Q}'} + c'_y(y,\hat{y}).\tilde{y} = 0,
\label{eqsoltrans}
\end{eqnarray}
where $\tilde{y}\in \mathcal{Y}$ denotes the solution of the following linear system in $\mathcal{Q}'$:
\begin{eqnarray}
\varphi'_y(y,u).\tilde{y} = f. \label{syslin}
\end{eqnarray}
\end{definition}
From Assumption~$(\mathbf{A2})$, system~\eqref{syslin} is well-posed. The existence of solutions for adjoint system~\eqref{sysadj}, in the sense of Definition~\ref{defsoladj}, is stated as follows:
\begin{proposition}
Let be $(y,u) \in \mathcal{Y} \times \mathcal{U}$. There exists a unique solution $p \in \mathcal{Q}$ to system~\eqref{sysadj}, in the sense of Definition~\ref{defsoladj}, and there exists a constant $C(y,u) >0$ (depending only on $(y,u)$) such that
\begin{eqnarray*}
\| p \|_{\mathcal{Q}} & \leq & C(y,u) \| c'_y(y,\hat{y}) \|_{\mathcal{Y}'}.
\end{eqnarray*}
\end{proposition}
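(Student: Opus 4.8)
The plan is to construct $p$ through the transposition identity of Definition~\ref{defsoladj} and then realize it as an element of $\mathcal{Q}$ by invoking the reflexivity of $\mathcal{Q}$. First I would record the precise consequence of Assumption~$(\mathbf{A2})$ for the fixed pair $(y,u)$: the operator $\varphi'_y(y,u): \mathcal{Y} \to \mathcal{Q}'$ is surjective, and the coercivity estimate $\|\cdot\|_{\mathcal{Y}} \leq C(y,u)\|\varphi'_y(y,u).\cdot\|_{\mathcal{Q}'}$ forces it to be injective with a bounded inverse. Hence for every $f \in \mathcal{Q}'$ there is a unique $\tilde{y} = \tilde{y}_f \in \mathcal{Y}$ solving~\eqref{syslin}, the solution operator $f \mapsto \tilde{y}_f$ is linear, and it satisfies $\|\tilde{y}_f\|_{\mathcal{Y}} \leq C(y,u)\|f\|_{\mathcal{Q}'}$.

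Next I would introduce the functional $\Lambda: \mathcal{Q}' \to \R$ defined by $\Lambda(f) := -\,c'_y(y,\hat{z}).\tilde{y}_f$. Its linearity follows from the linearity of $f \mapsto \tilde{y}_f$ together with the linearity of $c'_y(y,\hat{z}) \in \mathcal{Y}'$, while its boundedness comes from the chain $|\Lambda(f)| \leq \|c'_y(y,\hat{z})\|_{\mathcal{Y}'}\,\|\tilde{y}_f\|_{\mathcal{Y}} \leq C(y,u)\,\|c'_y(y,\hat{z})\|_{\mathcal{Y}'}\,\|f\|_{\mathcal{Q}'}$, which simultaneously yields $\|\Lambda\|_{\mathcal{Q}''} \leq C(y,u)\,\|c'_y(y,\hat{z})\|_{\mathcal{Y}'}$. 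Thus $\Lambda \in \mathcal{Q}''$.

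The reflexivity of $\mathcal{Q}$ then enters decisively: the canonical isometric isomorphism $\mathcal{Q} \to \mathcal{Q}''$ provides a unique $p \in \mathcal{Q}$ with $\langle p, f\rangle_{\mathcal{Q},\mathcal{Q}'} = \Lambda(f)$ for all $f \in \mathcal{Q}'$, which is exactly the identity~\eqref{eqsoltrans}; so $p$ solves~\eqref{sysadj} in the sense of Definition~\ref{defsoladj}. Since the embedding is isometric, $\|p\|_{\mathcal{Q}} = \|\Lambda\|_{\mathcal{Q}''}$, and the bound above gives the claimed estimate. Uniqueness is immediate: if $p_1$ and $p_2$ both satisfy~\eqref{eqsoltrans}, then $\langle p_1 - p_2, f\rangle_{\mathcal{Q},\mathcal{Q}'} = 0$ for every $f \in \mathcal{Q}'$, and injectivity of the canonical embedding $\mathcal{Q} \to \mathcal{Q}''$ forces $p_1 = p_2$.

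I do not expect a serious obstacle, as the argument is essentially one of duality and reflexivity. The only point requiring care is to use Assumption~$(\mathbf{A2})$ in full strength, that is both surjectivity \emph{and} the coercivity estimate, so that $f \mapsto \tilde{y}_f$ is well-defined on all of $\mathcal{Q}'$, uniformly bounded, and single-valued; without the estimate one would obtain only existence of $\tilde{y}_f$, losing both the bound on $\Lambda$ and the uniqueness of $p$. It is worth emphasizing that reflexivity is precisely what lets the bounded functional $\Lambda$ be represented by an element $p$ of $\mathcal{Q}$ rather than merely of $\mathcal{Q}''$, which is why that hypothesis was imposed in section~\ref{sec-not}.
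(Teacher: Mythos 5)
Your proof is correct and follows essentially the same route as the paper: both construct $p$ from the bounded solution operator of~\eqref{syslin} guaranteed by $(\mathbf{A2})$ (your functional $\Lambda$ is precisely the paper's $-\Lambda(y,u)^{\ast}c'_y(y,\hat{z})$ viewed as an element of $\mathcal{Q}''$), invoke reflexivity of $\mathcal{Q}$ to realize it as an element of $\mathcal{Q}$, and obtain the estimate from the operator bound. The only minor difference is uniqueness: the paper deduces it from injectivity of $\varphi'_y(y,u)^{\ast}$ via the density assumption $(\mathbf{A2})'$, whereas you read it off directly from the transposition identity and the injectivity of the canonical embedding, which is if anything more closely aligned with Definition~\ref{defsoladj}.
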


\begin{proof}
From Assumption~$(\mathbf{A2})'$, it is easy to verify that the mapping $\varphi_y(y,u)^\ast$ is injective, and so uniqueness holds. Let us prove existence. Denote by $\Lambda(y,u) : \mathcal{Q}' \rightarrow \mathcal{Y}$ the linear operator which maps $f$ to $\tilde{y}$, where $\tilde{y}$ is the solution of~\eqref{syslin}. From Assumption~$(\mathbf{A2})$, the operator $\Lambda(y,u)$ is well-defined and bounded, and consequently $\Lambda(y,u)^\ast : \mathcal{Y}' \rightarrow \mathcal{Q}'' \simeq \mathcal{Q}$ too. Now define $p = -\Lambda(y,u)^\ast c'_y(y,\hat{y})$. Then, for all $f\in \mathcal{Q}'$, we verify that
\begin{eqnarray*}
\langle p \, , f \rangle_{\mathcal{Q}',\mathcal{Q}} & = & - \langle c'_y(y,\hat{y}) \, , \Lambda(y,u)f \rangle_{\mathcal{Y}',\mathcal{Y}} \\
	& = & -\langle c'_y(y,\hat{y}) \, , \tilde{y} \rangle_{\mathcal{Y}',\mathcal{Y}} = -c'_y(y,\hat{y}).\tilde{y},
\end{eqnarray*}
and the announced estimate follows from boundedness of $\Lambda(y,u)^\ast$, which completes the proof.
\end{proof}

Consequently, the adjoint approach provides another expression for the gradient associated with the inner-problem, namely the quantity $G(u,w,\hat{y})$ introduced in~\eqref{exp-grad-direct}.

\begin{proposition} \label{prop-optcond-inner-adj}
Let be $w\in \mathcal{W}$. If $u$ is solution to Problem~\eqref{pbstandard}, then the gradient $G(u,w,\hat{y})$ introduced in Proposition~\ref{prop-optcond-inner} writes
\begin{equation}
G(u,w,\hat{y}) = 
r'_{u}(w,u)^{\ast} \gamma'(r(w,u))  + \varphi'_u(y,u)^\ast.p,
\label{defgradientstd}
\end{equation}
where $y=\mathbb{S}(u)$, $p$ is the solution of~\eqref{sysadj} corresponding to $(y,u)$, and necessarily $G(u,w) = 0$. 
\end{proposition}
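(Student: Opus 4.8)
The goal is to show that the gradient $G(u,w,\hat z)$ from Proposition~\ref{prop-optcond-inner}, originally expressed via the control-to-state map $\mathbb{S}$, admits the equivalent adjoint expression~\eqref{defgradientstd} involving the adjoint state $p$. The plan is to start from the direct expression $G(u,w,\hat z) = \mathbb{S}'(u)^{\ast}c_y(\mathbb{S}(u),\hat z) + r'_u(w,u)^{\ast}\gamma'(r(w,u))$ and rewrite only the first term, since the regularizer contribution $r'_u(w,u)^{\ast}\gamma'(r(w,u))$ is already identical in both formulas. So the entire task reduces to proving the identity
\begin{equation*}
\mathbb{S}'(u)^{\ast}c'_y(y,\hat z) = \varphi'_u(y,u)^{\ast}.p,
\qquad y = \mathbb{S}(u),
\end{equation*}
where $p$ solves the adjoint system~\eqref{sysadj}.

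First I would differentiate the state constraint identity $\varphi(\mathbb{S}(u),u) = 0$ with respect to $u$, which by the chain rule gives $\varphi'_y(y,u).\mathbb{S}'(u) + \varphi'_u(y,u) = 0$, hence $\mathbb{S}'(u) = -\varphi'_y(y,u)^{-1}.\varphi'_u(y,u)$ (the inverse existing on the relevant range by $(\mathbf{A2})$, and more precisely through the operator $\Lambda(y,u)$ of the preceding proposition). Taking adjoints yields $\mathbb{S}'(u)^{\ast} = -\varphi'_u(y,u)^{\ast}.(\varphi'_y(y,u)^{-1})^{\ast} = -\varphi'_u(y,u)^{\ast}.(\varphi'_y(y,u)^{\ast})^{-1}$. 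Applying this to $c'_y(y,\hat z)$ and recalling that the adjoint equation~\eqref{sysadj} reads $\varphi'_y(y,u)^{\ast}.p = -c'_y(y,\hat z)$, so that $-(\varphi'_y(y,u)^{\ast})^{-1}.c'_y(y,\hat z) = p$, I would conclude $\mathbb{S}'(u)^{\ast}c'_y(y,\hat z) = \varphi'_u(y,u)^{\ast}.p$, which is exactly the desired identity.

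To keep the argument rigorous in the Banach-space, transposition setting of Definition~\ref{defsoladj}, rather than manipulating formal inverses I would instead verify the identity by duality pairing. Concretely, for the linearized direction $\mathbb{S}'(u).h =: \tilde y_h$ one has $\varphi'_y(y,u).\tilde y_h = -\varphi'_u(y,u).h$, so $\tilde y_h = \Lambda(y,u)(-\varphi'_u(y,u).h)$. Then for any $h\in\mathcal{U}$,
\begin{equation*}
\langle \mathbb{S}'(u)^{\ast}c'_y(y,\hat z), h\rangle
= c'_y(y,\hat z).\tilde y_h
= -\langle p, \varphi'_u(y,u).h\rangle_{\mathcal{Q},\mathcal{Q}'} \cdot(-1)
= \langle \varphi'_u(y,u)^{\ast}.p, h\rangle,
\end{equation*}
where the middle equality uses the defining transposition relation~\eqref{eqsoltrans} for $p$ applied to $f = -\varphi'_u(y,u).h \in \mathcal{Q}'$. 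This establishes the two terms agree for all $h$, hence the gradient expressions coincide; the vanishing $G(u,w)=0$ at a minimizer is then inherited directly from Proposition~\ref{prop-optcond-inner}.

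The main obstacle, and the reason I would favor the transposition route over formal inversion, is purely functional-analytic bookkeeping: $\varphi'_y(y,u)$ need not be boundedly invertible as an operator $\mathcal{Y}\to\mathcal{Q}'$ in a naive sense, and $p$ lives in $\mathcal{Q}\simeq\mathcal{Q}''$ only after the reflexivity of $\mathcal{Q}$ is invoked, so the adjoint $\Lambda(y,u)^{\ast}:\mathcal{Y}'\to\mathcal{Q}''\simeq\mathcal{Q}$ must be used with care regarding which duality brackets appear. Tracking the correct pairings $\langle\cdot,\cdot\rangle_{\mathcal{Q},\mathcal{Q}'}$ versus $\langle\cdot,\cdot\rangle_{\mathcal{Y}',\mathcal{Y}}$ and the sign conventions is the only genuinely delicate point; once the weak (transposition) definition of $p$ is used as above, the identity falls out immediately and no estimate beyond those already proved is needed.
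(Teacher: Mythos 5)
Your proposal is correct and takes essentially the same route as the paper's proof: both reduce the claim to the identity $\mathbb{S}'(u)^{\ast}c'_y(y,\hat z) = \varphi'_u(y,u)^{\ast}.p$ by differentiating the constraint $\varphi(\mathbb{S}(u),u)=0$ (the paper's identity~\eqref{eq-state-diff}) and then invoking the adjoint state, with the vanishing of $G$ inherited from Proposition~\ref{prop-optcond-inner}. The only difference is presentational: you plug in $p$ through its transposition definition~\eqref{eqsoltrans}, while the paper substitutes the strong form $c'_y(y,\hat z) = -\varphi'_y(y,u)^{\ast}.p$ of~\eqref{sysadj} and then applies~\eqref{eq-state-diff}; under $(\mathbf{A2})$ these are equivalent, and your variant is if anything slightly more faithful to Definition~\ref{defsoladj}.
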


\begin{proof}
From Proposition~\ref{prop-cts}, Problem~\eqref{pbstandard} reduces to $
\displaystyle \min_{u\in \mathcal{U}} J(\mathbb{S}(u),u,w)$. Following the Karush-Kuhn-Tucker conditions, if $u$ is optimal to Problem~\eqref{pbstandard}, then $ \frac{\p}{\p u}\left(J(\mathbb{S}(u),u,w) \right) = 0$,
with for all $\tilde{u} \in U$
\begin{eqnarray}
\frac{\p}{\p u}\left(J(\mathbb{S}(u),u,w) \right).\tilde{u} & = & 
J'_y(\mathbb{S}(u),u,w).(\mathbb{S}'(u).\tilde{u}) +
J'_u(\mathbb{S}(u),u,w).\tilde{u} \nonumber \\
& = & \langle c'_y(y,\hat{y}) \, , \mathbb{S}'(u).\tilde{u} \rangle_{\mathcal{U}}
+ \langle \gamma' (r(w,u)) \, , r'_{u}(w,u).\tilde{u} \rangle_{\mathcal{V}} \nonumber \\
& = & \langle \mathbb{S}'(u)^{\ast}c'_y(y,\hat{y}) + r'_{u}(w,u)^\ast\gamma'(r(w,u))^{\ast} \, , \tilde{u} \rangle_{\mathcal{U}} \nonumber \\
& = &  \langle -\mathbb{S}'(u)^{\ast}\varphi'_y(y,u)^{\ast}.p + r'_{u}(w,u)^\ast \gamma'(r(w,u)) \, , \tilde{u} \rangle_{\mathcal{U}},\label{eqproof}
\end{eqnarray}
where $y = \mathbb{S}(u)$ and $p$ satisfies~\eqref{sysadj}. By differentiating the identity $\varphi(\mathbb{S}(u),u) = 0$, for all $\tilde{u} \in \mathcal{U}$ we obtain
\begin{equation}
\varphi'_u(y,u).\tilde{u} + \varphi_y'(y,u).(\mathbb{S}'(u).\tilde{u})  =  0, 
\label{eq-state-diff}
\end{equation}
which implies that
\begin{equation*}
-\mathbb{S}'(u)^{\ast}\varphi'_y(y,u)^{\ast} = \varphi'_u(y,u)^{\ast},
\end{equation*}
and thus~\eqref{eqproof} reduces to 
\begin{equation*}
\frac{\p}{\p u}\left(J(\mathbb{S}(u),u,w) \right).\tilde{u}
 =  \langle \varphi'_u(y,u)^{\ast}.p + r'_u(w,u)^\ast \gamma'(r(w,u))
 \, , \tilde{u}\rangle_{\mathcal{U}',\mathcal{U}} = 0
\end{equation*}
for all $\tilde{u} \in \mathcal{U}$, which completes the proof.
\end{proof}




\begin{remark} \label{remark1}
One could also have obtained the necessary conditions of Proposition~\ref{prop-optcond-inner-adj} by considering a stationary point of the Lagrangian functional $\mathscr{L}$. Indeed, vanishing the derivative of $\mathscr{L}$ with respect to $p$ yields $\varphi(y,u) = 0  \Leftrightarrow  y = \mathbb{S}(u)$,
in virtue of Proposition~\ref{prop-cts}. Vanishing the derivative of $\mathscr{L}$ with respect to $y$ leads to the adjoint system~\eqref{sysadj}. With the chain rule, we calculate for all $\tilde{u} \in  \mathcal{U}$:
\begin{eqnarray}
\frac{\p \mathscr{L}}{\p u}(w,u).\tilde{u} & = & \langle \gamma'(r(w,u)) \, ,  r_u'(w,u).\tilde{u} \rangle_{\mathcal{V}}
 + c'_y(\mathbb{S}(u),\hat{y}).(\mathbb{S}'(u).\tilde{u})  \nonumber\\
 & = & \langle r'_u(w,u)^\ast \gamma'(r(w,u)) \, , \tilde{u} \rangle_{\mathcal{U}}
 - \langle \varphi'_y(\mathbb{S}(u),u)^\ast p \, , \mathbb{S}'(u).\tilde{u}\rangle_{\mathcal{Y}',\mathcal{Y}} \nonumber \\
 & = & \langle r'_u(w,u)^\ast \gamma'(r(w,u)) \, , \tilde{u} \rangle_{\mathcal{U}}
 - \langle  p \, , \varphi'_y(\mathbb{S}(u),u).(\mathbb{S}'(u).\tilde{u})\rangle_{\mathcal{U}}, \label{eqproof2}
\end{eqnarray}
where $p$ is solution of~\eqref{sysadj} with $y = \mathbb{S}(u)$. Using~\eqref{eq-state-diff}, we see that~\eqref{eqproof2} reduces to 
\begin{equation*}
\frac{\p \mathscr{L}}{\p u}(w,u)  =  \langle r'_u(w,u)^\ast \gamma'(r(w,u))\, , \tilde{u}\rangle_{\mathcal{U}} 
+ \langle p \, , \varphi'_u(y,u). \tilde{u}\rangle_{\mathcal{U}',\mathcal{U}}
= \langle G(u,w,\hat{y}),\tilde{u} \rangle_{\mathcal{U}',\mathcal{U}},
\end{equation*}
which leads to the same result as Proposition~\ref{prop-optcond-inner-adj}. Therefore the identities given in Proposition~\ref{prop-optcond-inner-adj} can be summarized as
\begin{equation*}
\left(\frac{\p \mathscr{L}}{\p y}(y,u,p),\frac{\p \mathscr{L}}{\p u}(y,u,p),\frac{\p \mathscr{L}}{\p p}(y,u,p) \right) = 0.
\label{stdnecoptsum}
\end{equation*}
\end{remark}


The necessary optimality conditions given in Proposition~\ref{prop-optcond-inner} are self-consistent, and thus more appropriate for theoretical analysis, while those provided by Proposition~\ref{prop-optcond-inner-adj} are more convenient more numerical realization. Following these two results, the constraint of Problem~\eqref{pbmain} is then {\it relaxed} in the next section, by being simply replaced by the identity $G(u,w,\hat{y}) = 0$.

\section{Relaxation approach for the outer-problem
} \label{sec-relax0}


\subsection{Relaxed formulation of the main problem} \label{sec-relaxed-outer}
The aim of the present section is to translate the abstract constraint of Problem~\eqref{pbmain}, namely "{\it $u_k$ is solution of Problem~\eqref{pbstandard}}[\dots]", into an equality constraint. For that purpose, we use the necessary vanishing gradient condition given in Propositions~\ref{prop-optcond-inner} and~\ref{prop-optcond-inner-adj}. Note that a solution to Problem~\eqref{pbstandard} with $(\hat{y},r) = (\hat{y}_k,r(w,\cdot))$ satisfies necessarily $G(u_k,w,\hat{y}_k) = 0$, but if it satisfies this identity only, then in general this is not necessarily a solution of Problem~\eqref{pbstandard}. Therefore we consider the relaxed version of~\eqref{pbmain} below:

\begin{equation} \label{pbmainrelaxed} \tag{$\tilde{\ast\ast}$}
\left\{ \begin{array} {l}
\displaystyle \min_{w\in \mathcal{W}} \quad
\frac{1}{2K} \sum_{k=1}^K \| u_k - \hat{u}_k \|_{\mathcal{U}}^2
+\frac{\nu}{2} \|w\|^2_{\mathcal{W}} 
, \\[15pt]
\text{subject to: $G(u_k,w,\hat{y}_k) = 0$ for all $1\leq k\leq K$.}
\end{array} \right.
\end{equation}
The parameter $\nu \geq 0$ is given, and introduced for theoretical purpose mainly (see Theorem~\ref{th-exist-outer}). This regularization term for the set of weights $w$ could also facilitate the numerical solving of Problem~\eqref{pbmainrelaxed}, by forcing the weights to be bounded. Note that a parameter-to-solution mapping $w \mapsto u$ characterizing $G(u,w,\hat{y}) = 0$ is not always available. Even if such a mapping exists, its derivative is difficult to describe explicitly. See section~\ref{sec-regression} for more details. For taking into account the equality constraints of problem~\eqref{pbmainrelaxed}, we adopt an adjoint approach and introduce the multipliers $\mu \in \mathcal{U}$, with the following Lagrangian mapping:
\begin{eqnarray}
\begin{array} {rccl}
\mathbf{L}: & \mathcal{U} \times \mathcal{W} \times \mathcal{U} & \rightarrow & \R \\[5pt]
 & (u,w, \mu) & \mapsto &  \displaystyle\frac{1}{2}\| u - \hat{u} \|_{\mathcal{U}}^2
+ \frac{\nu}{2} \| w\|^2_{\mathcal{W}}
 + \left\langle \mu \, , G(u,w,\hat{y}) \right\rangle_{\mathcal{U},\mathcal{U}'}
\end{array}
\label{myLag}
\end{eqnarray}
Like in section~\ref{sec-opt-inner} for the inner-problems (see Remark~\ref{remark1}), a solution of Problem~\eqref{pbmainrelaxed} can be sought -- when $K=1$ -- as a saddle-point of $\mathbf{L}$ with respect to the variables $(u,w)$ and $\mu$. In practice, we shall consider $(u_k)_{1\leq k \leq K}$, $\mu = (\mu_k)_{1\leq k\leq K}$ and
\begin{eqnarray}
\begin{array} {rccl}
\mathbf{L}^{(K)}: & \mathcal{U}^K \times \mathcal{W} \times \mathcal{U}^K & \rightarrow & \R \\[5pt]
 & (u,w, \mu) & \mapsto &  \displaystyle\frac{1}{2K}
\sum_{k=1}^K\| u_k - \hat{u}_k  \|_{\mathcal{U}}^2
+ \frac{\nu}{2} \| w\|^2_{\mathcal{W}}
 + \frac{1}{K}\sum_{k=1}^K\left\langle \mu_k \, , G(u_k,w,\hat{y}_k) \right\rangle_{\mathcal{U},\mathcal{U}'}.
\end{array}
\label{superLK}
\end{eqnarray}
Up to a change of the functional framework, for the theoretical questions we can simply consider the functional introduced in~\eqref{myLag}, by considering the vector version of the different variables, namely $u = (u_k)_{1\leq k\leq K}$, $\hat{u} = (\hat{u}_k)_{1\leq k\leq K}$ and $\mu = (\mu_k)_{1\leq k\leq K}$. 
Before deriving conditions of stationarity for $\mathbf{L}$, let us discuss about existence of solutions for Problem~\eqref{pbmainrelaxed}.

\subsection{Existence of minimizers for the relaxed problem} \label{sec-existmin} \label{sec-exist-outer}

Existence of solutions for Problem~\eqref{pbmainrelaxed} follows from standard arguments:

\begin{theorem} \label{th-exist-outer}
Assume that $\nu >0$. Then Problem~\eqref{pbmainrelaxed} admits a global minimizer.
\end{theorem}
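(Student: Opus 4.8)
The plan is to apply the direct method of the calculus of variations, exploiting that both $\mathcal{W}$ and $\mathcal{U}$ are finite-dimensional. First I would recast~\eqref{pbmainrelaxed} as a minimization over the admissible set
\[
\mathcal{F}_{\mathrm{ad}} := \left\{ (w, u_1, \dots, u_K) \in \mathcal{W} \times \mathcal{U}^K : G(u_k, w, \hat{z}_k) = 0, \ 1\leq k \leq K \right\},
\]
and check that $\mathcal{F}_{\mathrm{ad}} \neq \emptyset$: fixing any $w \in \mathcal{W}$, Proposition~\ref{prop-exist-min-inner} provides a minimizer $u_k$ of the inner-problem~\eqref{innerpb} associated with $(\hat{z}_k, r(w,\cdot))$, and Proposition~\ref{prop-optcond-inner} then guarantees $G(u_k, w, \hat{z}_k) = 0$. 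Hence the infimum, call it $m \geq 0$, is taken over a nonempty set and is finite, and I would select a minimizing sequence $(w^n, u_1^n, \dots, u_K^n) \in \mathcal{F}_{\mathrm{ad}}$ whose objective values converge to $m$.

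The second step is coercivity, which is exactly where the hypothesis $\nu > 0$ enters and which I expect to be the crux. Since the objective values are bounded along the minimizing sequence, the term $\frac{\nu}{2}\|w^n\|^2_{\mathcal{W}}$ is bounded, and because $\nu > 0$ this forces $(w^n)_n$ to be bounded in $\mathcal{W}$; simultaneously the misfit term bounds $\|u_k^n - \hat{u}_k\|_{\mathcal{U}}$, hence $(u_k^n)_n$ is bounded in $\mathcal{U}$ for each $k$ (the data $\hat{u}_k$ being fixed). As $\mathcal{W}$ and $\mathcal{U} \simeq \R^{n_{\mathrm{in}}}$ are finite-dimensional, the Bolzano--Weierstrass theorem yields a subsequence, still denoted identically, converging to some $(w^\star, u_1^\star, \dots, u_K^\star)$. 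Without the regularization term, i.e. for $\nu = 0$, this compactness in $w$ could fail, since the weights of a neural network may diverge while the outputs stay controlled; this is precisely why the term is introduced.

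The final step is to show the limit is feasible and optimal. For this I would establish that $(u,w) \mapsto G(u,w,\hat{z}_k)$ is continuous: in the expression~\eqref{exp-grad-direct}, the first summand $\mathbb{S}'(u)^\ast c_y(\mathbb{S}(u),\hat{z}_k)$ depends continuously on $u$ because $\mathbb{S}$ is of class $C^2$ (Proposition~\ref{prop-cts}) and $c$ is $C^2$ by~$(\mathbf{A4})$, while the second summand $r'_u(w,u)^\ast \gamma'(r(w,u))$ depends continuously on $(w,u)$ since $r$ is assembled from affine maps and the $C^2$ activation $\rho$ (Assumption~$(\mathbf{A5})$) and $\gamma$ is smooth. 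Passing to the limit in the constraints $G(u_k^n,w^n,\hat{z}_k)=0$ then yields $G(u_k^\star,w^\star,\hat{z}_k)=0$, so $(w^\star, u_1^\star,\dots,u_K^\star) \in \mathcal{F}_{\mathrm{ad}}$. As the objective is continuous (a sum of squared norms plus the quadratic term in $w$), its value at the limit equals the infimum $m$, whence the limit is a global minimizer. The only genuinely delicate points are the coercivity in $w$, for which $\nu > 0$ is indispensable, and the continuity of $G$, which I would read off directly from the regularity furnished by Proposition~\ref{prop-cts} and assumptions~$(\mathbf{A4})$--$(\mathbf{A5})$.
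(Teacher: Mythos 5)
Your proposal is correct and follows essentially the same route as the paper's own proof: nonemptiness of the feasible set via Proposition~\ref{prop-exist-min-inner} together with Proposition~\ref{prop-optcond-inner}, boundedness of the minimizing sequence from the objective (with $\nu>0$ controlling the weights), strong compactness from finite-dimensionality of $\mathcal{U}$ and $\mathcal{W}$, and passage to the limit in the constraint $G(u_k^n,w^n,\hat{z}_k)=0$ by continuity of $G$. Your write-up is in fact slightly more explicit than the paper's on two points it leaves implicit, namely the justification of the continuity of $G$ from~\eqref{exp-grad-direct} and the regularity assumptions, and the final step that continuity of the objective makes the limit attain the infimum.
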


\begin{proof}
The existence of feasible solutions for Problem~\eqref{pbmainrelaxed} is due to Proposition~\ref{prop-exist-min-inner}, combined with Proposition~\ref{prop-optcond-inner} (or also Proposition~\ref{prop-optcond-inner-adj}). Since the functional
\begin{equation}
\mathcal{U} \times \mathcal{W} \ni (u,w) \mapsto \frac{1}{2}\|u-\hat{u}\|^2_{\mathcal{U}} + \frac{\nu}{2}\|w\|_{\mathcal{W}}^2
\label{nice-func}
\end{equation}
is bounded from below for~$\nu>0$, Problem~\eqref{pbmainrelaxed} admits a minimizing sequence $(\mathrm{u}^{(n)}, w^{(n)})_{n}$ of feasible solutions. Further, from~\eqref{nice-func}, the sequences~$(u^{(n)})_n$ and~$(w^{(n)})_n$ are bounded in $\mathcal{U}$ and $\mathcal{W}$, respectively. Remind that the spaces $\mathcal{U}$ and $\mathcal{W}$ are finite-dimensional. Consequently, up to extraction, there exists $(u,w) \in \mathcal{U} \times \mathcal{W}$ such that $(u^{(n)},w^{(n)})_n$ converges strongly towards $(u,w)$. It is clear that $\mathcal{U} \times \mathcal{W} \ni (u,w) \mapsto G(u,w,\hat{y}) \in \mathcal{U}'$ is continuous, so that we can pass to the limit in $G(u^{(n)},w^{(n)},\hat{y}) = 0$ for obtaining $G(u,w,\hat{y}) = 0$, which concludes the proof, as $(u,w)$ is then solution of Problem~\eqref{pbmainrelaxed}.
\end{proof}

\begin{remark}
If we consider the case $\nu = 0$, the boundedness of the sequence~$(w_n)_n$ in the proof above is {\it a priori} not guaranteed. For proceeding like we did, in such a case we may need to assume further hypotheses on the neural network. For example, we could exploit the equality $G(u^{(n)},w^{(n)},\hat{y}) = 0$, that is
\begin{equation*}
\mathbb{S}'(u^{(n)})^{\ast}c_y(\mathbb{S}(u^{(n)}),\hat{y}) + r'_u(w^{(n)},u^{(n)})^{\ast}\gamma'(r(w^{(n)},u^{(n)})) = 0,
\end{equation*}
due to the identity~\eqref{exp-grad-direct}. The sequence~$(u^{(n)})_n$ still converges, which implies that the quantity $$r'_u(w^{(n)},u^{(n)})^{\ast}\gamma'(r(w^{(n)},u^{(n)}))$$ 
is uniformly bounded with respect to $n$. In the simple one-dimensional case where $n_{\mathrm{in}} = n_{\mathrm{out}} = 1$, and $\gamma \equiv \mathrm{Id}$, this would lead us to consider that the boundedness of the gradient of the neural network implies the boundedness of its weights, which is not true in general. Therefore considering $\nu >0$ seems to be reasonable, as this provides a simple proof to the existence of minimizers. 
\end{remark}

\subsection{Derivatives of second-order for the neural network} \label{sec-awful}
In section~\ref{sec-opt-outer}, we will need expressions of the second-order derivatives of the neural network. From the formula~\eqref{diffNNu} obtained in section~\ref{sec-NN}, we derive
\begin{equation}
r''_{uu}(w,u) = A_L \sum_{\ell=1}^{L-1} \left(\left( \mathrlap{\prod_{k=1}^{\ell-1}}{\hspace*{-0.2pt}\longrightarrow} 
\rho'(r^{(k)}) A_k
\right)
\rho''(r^{(\ell)}) \frac{\p r^{(\ell)}}{\p u} A_{\ell}
\left( 
\mathrlap{\prod_{k=\ell+1}^{L-1}}{\hspace*{4.5pt}\longrightarrow} 
\rho'(r^{(k)})A_k
\right)\right), \label{diffNNuu}
\end{equation}
and
\begin{equation}
r''_{u{w_s}}(w,u).\tilde{w} = A_L \sum_{\ell=1}^{L-1} \left(\left( \mathrlap{\prod_{k=1}^{\ell-1}}{\hspace*{-0.2pt}\longrightarrow} 
\rho'(r^{(k)}) A_k
\right)
\frac{\p}{\p w_s}\left(\rho'(r^{(\ell)}(w^{(\ell)},u)) A_{\ell} \right).\tilde{w}
\left( 
\mathrlap{\prod_{k=\ell+1}^{L-1}}{\hspace*{4.5pt}\longrightarrow} 
\rho'(r^{(k)})A_k
\right)
\right) \label{diffNNuw}
\end{equation}
for all $1\leq s \leq L-1$, where we have simply denoted $r^{(k)} = r^{(k)}(w^{(k)},u)$. We recall
\begin{eqnarray*}
\frac{\p r^{(\ell)}}{\p u}(w^{(\ell)},u)  = A_L\mathrlap{\prod_{k = 1}^{\ell-1}}{\hspace*{0pt}\longrightarrow} \rho'(r^{(k)}) A_{k}, & & 
\frac{\p r^{(\ell)}}{\p w_{s}}(w^{(\ell)},u).\tilde{w}  = 
A_L \left(\mathrlap{\prod_{k=s}^{\ell-1}}{\hspace*{0pt}\longrightarrow} \rho'(r^{(k)})A_k\right) \left( \tilde{w}( \rho(r^{(s-1)}))\right),
\end{eqnarray*}
and for all $\tilde{w} :\mathrm{z} \mapsto \tilde{A}\mathrm{z} + \tilde{b}$
\begin{equation*}
\frac{\p}{\p w_s}\left(\rho'(r^{(\ell)}(w^{(\ell)},u)) A_{\ell} \right).\tilde{w} = 
\displaystyle 
\left\{
\begin{array} {cl}
\displaystyle 0 & \text{if } s>\ell, \\[10pt]
\displaystyle 
\rho'(r^{(\ell)}(w^{(\ell)},u))\tilde{A} + 
\rho''(r^{(\ell)})
\left(\tilde{w}\rho(r^{(\ell-1)})\right) A_{\ell}
& \text{if } s = \ell, \\[10pt]
\displaystyle \rho''(r^{(\ell)})
\left(\frac{\p r^{(\ell)}}{\p w_s}.\tilde{w}\right) A_{\ell} & \text{if } s < \ell.
\end{array}
\right.
\end{equation*}
Like for~\eqref{trickvec}, the products of type $(\rho''\tilde{r})_i = \rho''_i\tilde{r}_i$ are defined componentwise. For $s = L$, we have for all $\tilde{w} :\mathrm{z} \mapsto \tilde{A}\mathrm{z} + \tilde{b}$
\begin{eqnarray*}
r''_{u{w_L}}(w,u).\tilde{w} = \tilde{A}\mathrlap{\prod_{k = 1}^{\ell-1}}{\hspace*{0pt}\longrightarrow} \rho'(r^{(k)}) A_{k}.
\end{eqnarray*}
These expressions in themselves are not essential for what follows in the present section, but they can help the reader to reproduce the numerical results we obtained in section~\ref{sec-num}.

\subsection{Properties of the adjoint operator} \label{sec-regression}

The adjoint equation for Problem~\eqref{pbmainrelaxed} is obtained by differentiating the Lagrangian~$\mathbf{L}$ with respect to the variable~$u$, which gives
\begin{equation}
G'_u(u,w,\hat{y})^{\ast}. \mu = -(u-\hat{u}).
\label{eq-superadj0}
\end{equation}
Let us take a look at the operator~$G'_u(u,w,\hat{y})^{\ast} \in \mathcal{L}(\mathcal{U},\mathcal{U}')$ which arises in this equation. Since $\mathcal{U} \simeq \R^{n_{\mathrm{in}}}$ is assumed to be of finite dimension, for each $(u,w) \in \mathcal{U} \times \mathcal{W}$ the operator $G'_u(u,w,\hat{y})$ can be represented by a matrix of $\R^{n_{\mathrm{in}} \times n_{\mathrm{in}}}$. Further, reminding that $G(u,w,\hat{y}) = \frac{\p }{\p u} J(\mathbb{S}(u),u,w)$, it yields that $G'_u(u,w,\hat{y})$ is an Hessian matrix, and so it is represented by a symmetric matrix. Let us give the expression of~$G'_u(u,w,\hat{y})$, by denoting $y = \mathbb{S}(u)$ :
\begin{equation*}
G'_u(u,w,\hat{y})  =  \mathbb{S}'(u)^{\ast}c''_{yy}(y,\hat{y})\mathbb{S}'(u) +
c'_y(y,\hat{y})\mathbb{S}''(u) + \gamma'(r(w,u))r_{uu}''(w,u) + r'(w,u)^{\ast} \gamma''(r(w,u)) r'(w,u).
\end{equation*}
More specifically, for all $v_1$, $v_2 \in \mathcal{U}$ we have:
\begin{eqnarray*}
\left\langle G'_u(u,w,\hat{y}).v_1 ; v_2\right\rangle_{\mathcal{U}',\mathcal{U}} & = & c''_{yy}(y,\hat{y}).(\mathbb{S}'(u).v_1, \mathbb{S}'(u).v_2) +
c'_y(y,\hat{y}).(\mathbb{S}''(u).(v_1,v_2)) \\
& & +  \gamma''(r(w,u)).( r'(w,u).v_1, r'(w,u).v_2)
+\gamma'(r(w,u)).(r_{uu}''(w,u).(v_1,v_2)).
\end{eqnarray*}
Solving equation~\eqref{eq-superadj0} determines the value of the variable~$\mu$, and thus the question arises whether the operator $G'_u(u,w,\hat{y})^{\ast} = G'_u(u,w,\hat{y})$ is invertible.  Let us comment on the general case where $G'_u(u,w,\hat{y})$ is not necessarily invertible.

\subsubsection{When the adjoint operator is invertible}
Consider any $(u,w) \in \mathcal{U} \times  \mathcal{W}$ such that $G'_u(u,w,\hat{y})$ is an invertible matrix. Analogously to the inner-problems, such a case corresponds to an assumption of type~$(\mathbf{A2})$ transcribed to Problem~\eqref{pbmainrelaxed}, and following section~\ref{sec-cts} we can prove via the implicit function theorem the existence of a mapping $w\mapsto u$ characterizing the equality $G(u,w,\hat{y}) = 0$. This is a mapping of type {\it control-to-state}, since for the outer-problem the variable $u$ plays the role of a {\it state} and $w$ is the {\it command}. This case corresponds to variables $w\in \mathcal{W}$ such that the associated inner-problem admits a unique solution $u\in \mathcal{U}$. Since $G'_u(u,w,\hat{y})$ is a square matrix, the matrix $G'_u(u,w,\hat{y})^{\ast}$ is also invertible, and so the adjoint equation~\eqref{eq-superadj0}, namely $G'_u(u,w,\hat{y})^{\ast}\mu + (u-\hat{u})  =0$ in the generic case, admits the unique solution
\begin{equation*}
\mu = -G'_u(u,w,\hat{y})^{-T}(u-\hat{u}).
\end{equation*}
Therefore in this case we are able to derive the necessary Karush-Kuhn-Tucker conditions for Problem~\eqref{pbmainrelaxed}, as the associated qualification constraints are satisfied without ambiguity (see Theorem~\ref{theorem-main}).

\subsubsection{When the adjoint operator is not invertible} \label{sec-trick}

In the case where $G'_u(u,w,\hat{y})$ is not invertible, in practice we introduce a perturbation of the latter, based on the following basic result:

\begin{lemma}
Let $A$ be a square matrix. There exists $\varepsilon >0$ such that $A-\varepsilon \I$ is invertible.
\end{lemma}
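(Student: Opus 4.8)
The plan is to reduce invertibility of $A - \varepsilon \I$ to a condition on the roots of a single polynomial. Since the expression $A - \varepsilon \I$ only makes sense for a square matrix, I take $A \in \R^{n\times n}$ for some $n \in \N$. First I would recall the standard characterization: $A - \varepsilon \I$ is invertible if and only if $\det(A - \varepsilon \I) \neq 0$, i.e. if and only if $\varepsilon$ is not an eigenvalue of $A$.

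The key step is to consider the map $\varepsilon \mapsto P(\varepsilon) := \det(A - \varepsilon \I)$. Expanding the determinant shows that $P$ is a polynomial in $\varepsilon$ of degree exactly $n$ (its top-degree term being $(-1)^n \varepsilon^n$), and in particular $P$ is not the zero polynomial. Consequently $P$ admits at most $n$ distinct real roots. The set of ``bad'' values, namely those $\varepsilon$ for which $A - \varepsilon \I$ is singular, is precisely the zero set of $P$, hence finite.

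To conclude, I would simply remark that a finite set cannot cover the half-line $(0,+\infty)$: choosing any $\varepsilon > 0$ that is distinct from the finitely many roots of $P$ yields $\det(A - \varepsilon \I) \neq 0$, so that $A - \varepsilon \I$ is invertible. Equivalently, one may phrase the argument without determinants by noting that $A$ has at most $n$ eigenvalues, so any positive $\varepsilon$ avoiding all of them works.

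There is no genuine obstacle here, as this is an elementary fact of linear algebra. The only mild point worth being explicit about is the requirement $\varepsilon > 0$ (rather than merely $\varepsilon \neq 0$), which is immediate since we are excluding only finitely many values from the positive half-line. It is precisely this freedom to keep $\varepsilon$ small and positive that makes the perturbation $A - \varepsilon \I$ useful for the numerical regularization discussed in section~\ref{sec-trick}.
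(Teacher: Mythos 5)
Your proof is correct and follows essentially the same route as the paper: both identify the singular values of $\varepsilon$ with the roots of $\det(A-\varepsilon\I)$, i.e.\ the eigenvalues of $A$, and use their finiteness to pick an admissible $\varepsilon>0$ (the paper makes the choice explicit by taking $\varepsilon$ below the smallest nonzero eigenvalue modulus, which also makes clear that every sufficiently small $\varepsilon>0$ works, as needed in section~\ref{sec-trick}).
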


\begin{proof}
Denote by $\chi(\lambda) := \det(A-\lambda \I)$ the characteristic polynomial of $A$. If $\mathrm{Sp(A)} = \{0\}$, any $\varepsilon \neq 0$ is suitable. Otherwise, choose any $\varepsilon$ such that $0 < \varepsilon < \inf\big\{ |\lambda |, \ \lambda \in \mathrm{Sp}(A) \setminus \{0\} \big\}$. 
Then we have $\det(A-\varepsilon \I) = \chi(\varepsilon) \neq 0$, which concludes the proof.
\end{proof}

Therefore, when $G'_u(u,w,\hat{y})$ is singular, we can find $\varepsilon(u,w,\hat{y}) > 0$ small enough such that $G'_u(u,w,\hat{y}) - \varepsilon(u,w,\hat{y}) \I$ is invertible. Note that, up to adapting the proof above, we can also choose $\varepsilon(u,w,\hat{y}) > 0$ small enough such that $G'_u(u,w,\hat{y}) + \varepsilon(u,w,\hat{y}) \I$ is invertible. Thus we replace $G'_u(u,w,\hat{y})$ by $G'_u(u,w,\hat{y}) \pm \varepsilon(u,w,\hat{y}) \I$. Note that the data~$\hat{y}_k$ are of finite number, and thus the corresponding solutions~$u_k$ of the inner-problems are of finite number too. Therefore $\varepsilon >0$ can be chosen independent of~$u$ and $\hat{y}$, but not on the number of data~$K$. In practice, we can choose $\varepsilon = \varepsilon(w) >0$ as small as desired, as long as $G'_u(u,w,\hat{y}) \pm \varepsilon(w) \I$ is invertible. This is equivalent to add the small $L^2$-type regularizer $u\mapsto \mp \frac{\varepsilon(w)}{2}\|u\|_{\mathcal{U}}^2$ to the functional of the inner-problem~\eqref{pbstandard}.




\subsection{Necessary optimality conditions for the relaxed problem} \label{sec-relax} \label{sec-opt-outer}


The following result gives necessary conditions that an optimal solution of Problem~\eqref{pbmainrelaxed} has to fulfill.

\begin{theorem} \label{theorem-main}
Assume that $w\in \mathcal{W}$ is a solution of Problem~\eqref{pbmainrelaxed}. Then for all $1\leq k \leq K$ we have $
G(u_k,w,\hat{y}_k) = 0$, and necessarily
\begin{equation}
\mathbf{G}_{\ell}(w) := \nu w_{\ell} + \frac{1}{K}\sum_{k=1}^K G'_{w_{\ell}}(u_k,w,\hat{y}_k)^{\ast}. \mu_k = 0
\label{super-gradient}
\end{equation}
for all $1\leq \ell \leq L$, where $\mu_k$ is the solution of
\begin{equation}
 G'_u(u_k,w,\hat{y}_k)^{\ast}. \mu_k+ (u_k-\hat{u}_k) = 0 \label{eqadjouter-super} 
\end{equation}
for each $1\leq k \leq K$.
\end{theorem}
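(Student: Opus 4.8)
The plan is to derive the stated conditions as the Karush-Kuhn-Tucker (KKT) conditions for the relaxed problem~\eqref{pbmainrelaxed}, using the Lagrangian $\mathbf{L}^{(K)}$ introduced in~\eqref{superLK}. Since $w$ is a minimizer, the equality constraints $G(u_k,w,\hat{z}_k)=0$ hold by feasibility for all $1\leq k\leq K$, which gives the first assertion directly. The core of the argument is to show that a stationary point of $\mathbf{L}^{(K)}$ yields the remaining identities. First I would verify the constraint qualification: the relevant linearized constraint map is built from the operators $G'_u(u_k,w,\hat{z}_k)$, and by the discussion in section~\ref{sec-regression} (both the invertible case and, via the perturbation of section~\ref{sec-trick}, the singular case) these can be taken invertible, so the constraints are qualified and the KKT multiplier rule applies.

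Granting qualification, the existence of multipliers $\mu=(\mu_k)_{1\leq k\leq K}\in\mathcal{U}^K$ follows, and the necessary conditions are obtained by setting the partial derivatives of $\mathbf{L}^{(K)}$ to zero. Differentiating~\eqref{superLK} with respect to each $u_k$ and using that the $\mu_k$ enter the constraint term through $G'_u(u_k,w,\hat{z}_k)^{\ast}$, I recover equation~\eqref{eqadjouter-super}, namely $G'_u(u_k,w,\hat{z}_k)^{\ast}.\mu_k + (u_k-\hat{u}_k)=0$; this is exactly the adjoint equation~\eqref{eq-superadj0} written out for each data index. Differentiating $\mathbf{L}^{(K)}$ with respect to the weight block $w_{\ell}$, the regularization term $\tfrac{\nu}{2}\|w\|^2_{\mathcal{W}}$ contributes $\nu w_{\ell}$ and the constraint term contributes $\tfrac{1}{K}\sum_{k=1}^K G'_{w_{\ell}}(u_k,w,\hat{z}_k)^{\ast}.\mu_k$, giving precisely~\eqref{super-gradient}. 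The second-order derivatives of the neural network computed in~\eqref{diffNNuu}--\eqref{diffNNuw} guarantee, via Assumptions~$(\mathbf{A4})$ and~$(\mathbf{A5})$ together with the $C^2$-regularity of $\mathbb{S}$ from Proposition~\ref{prop-cts}, that $G$ is continuously differentiable in both $u$ and $w$, so all these derivatives are well-defined.

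The main obstacle I anticipate is the constraint qualification when $G'_u(u_k,w,\hat{z}_k)$ is singular for some $k$. In that degenerate situation the standard surjectivity-of-the-linearized-constraint hypothesis can fail, and the KKT multiplier rule is not immediately available. I would resolve this exactly as prepared in section~\ref{sec-trick}: replace $G'_u$ by the invertible perturbation $G'_u \pm \varepsilon \I$ (equivalently, add the small regularizer $\mp\tfrac{\varepsilon}{2}\|u\|^2_{\mathcal{U}}$ to each inner-problem), for which qualification holds and the multiplier $\mu_k=-\big(G'_u(u_k,w,\hat{z}_k)\pm\varepsilon\I\big)^{-T}(u_k-\hat{u}_k)$ is uniquely determined. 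Since $\varepsilon>0$ can be taken uniform in $k$ (the data are finite in number) and arbitrarily small, one passes to the stated form. A minor secondary point is to use the symmetry $G'_u(u_k,w,\hat{z}_k)^{\ast}=G'_u(u_k,w,\hat{z}_k)$ established in section~\ref{sec-regression}, which simplifies~\eqref{eqadjouter-super} but is not strictly needed to state the conditions.
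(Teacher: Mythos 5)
Your proposal is correct and follows essentially the same route as the paper's own proof: constraint qualification obtained from the invertibility of the operators $G'_u(u_k,w,\hat{z}_k)$ (or their $\varepsilon$-perturbation as prepared in section~\ref{sec-trick}), followed by stationarity of the Lagrangian $\mathbf{L}^{(K)}$ of~\eqref{superLK}, whose partial derivatives with respect to $u_k$ and $w_\ell$ yield~\eqref{eqadjouter-super} and~\eqref{super-gradient} respectively. The paper's proof is simply a more condensed version of this argument, leaving the differentiation of $\mathbf{L}^{(K)}$ implicit.
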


\begin{proof}
Since the linear mappings $G'_u(u_k,w,\hat{y}_k) \in \mathcal{L}(\mathcal{U},\mathcal{U}')$ are invertible for each $1\leq k \leq K$, or can be perturbed such that they become invertible (see section~\ref{sec-trick}), the qualification constraints of the Karush-Kuhn-Tucker conditions are satisfied for the Lagrangian~\eqref{superLK}. Therefore a solution of Problem~\eqref{pbmainrelaxed} is necessary a critical point of $\mathbf{L}^{(K)}$, which leads to the announced identities.
\end{proof}

In practice, we solve Problem~\eqref{pbmainrelaxed} iteratively, by updating the weights $w$ such that the norm of~\eqref{super-gradient} converges to zero. Given a set of weights~$w$, we first compute each $u_k$ solution of~\eqref{pbstandard} corresponding to $(\hat{y},r) = (\hat{y}_k, r(w,\cdot))$. Note that the variable $u = (u_k)_{1\leq k \leq K}$ plays the role of a state variable in Problem~\eqref{pbmainrelaxed}. Next we compute $\mu = (\mu_k)_{1\leq k\leq K}$, where each $\mu_k$ is solution of~\eqref{eqadjouter-super}. Finally we are able to determine the values of the left-hand-sides~$\mathbf{G}_{\ell}(w)$ in~\eqref{super-gradient}, that we use in a gradient rule.

\newpage

\section{A gradient-based algorithm} \label{sec-algo}

This section is devoted to algorithmic implementation of the relaxed approach for Problem~\eqref{pbmain}. Let us first explain how the inner-problems~\eqref{pbstandard} can be solved in practice. 

\subsection{Nesterov gradient descent for solving the inner-problems}
We solve the inner-problems with Nesterov algorithm~\cite{Nesterov}, also called the accelerated gradient method. It offers a nice compromise between robustness and rapidity of convergence, as inner-problems need to be solved many times, when solving the outer-problem. The corresponding method is given in Algorithm~\ref{algo-inner} below.

\setcounter{algorithm}{-1}
\begin{algorithm}[htpb]
\hfill 
	\begin{description}
		\item[Initialization:] \hfill
		\begin{description}
			\item[-] Initialize $u^{(0)}$ as $\displaystyle \frac{1}{K}\sum_{k=1}^K \hat{u}_k$.
		\end{description}

		\item[Initial gradient:] From $u^{(0)}$, compute the (initial) gradient as follows: \\[5pt]
		\begin{tabular} {lr}
		$\left.\begin{minipage}{11.5cm}
		\begin{description}
			\item[-] Compute the state $y$ corresponding to $u=u^{(0)}$, by solving $\varphi(y,u) = 0$.
			\item[-] Compute the adjoint state $p$ as solution of $\varphi'_y(y,u)^{\ast}.p = - c'_y(y,\hat{y}) $.
			\item[-] Compute the gradient $G^{(0)}= G(u,w,\hat{y})$ using formula~\eqref{defgradientstd}:\\
$\displaystyle G^{(0)}=  r'_{u}(w,u)^{\ast} \gamma'(r(w,u))  + \varphi'_u(y,u)^\ast.p$
		\end{description}
		\end{minipage} 
		\right\}$ & 
		\begin{minipage}{7cm}
		Compute the gradient  \\ in 3 steps
		\end{minipage}
		\end{tabular}
		\hfill \\Store $u^{(0)}$ and $G^{(0)}$.

		\item[Armijo rule:] Choose $\alpha = 0.5$. 
		\begin{description}
			\item[-] Find the smallest $n\in\N$ such that $J(u^{(0)}-\alpha^n G^{(0)}) < J(u^{(0)})$.
			\item[-] Define $u^{(1)} = u^{(0)} - \alpha^n G^{(0)}$. 
			\item[-] Compute the gradient $G^{(1)}$ as above, corresponding to $u^{(1)}$.
			\item[-] Store $u^{(1)}$ and $G^{(1)}$.
		\end{description}

	\item[Nesterov gradient steps:] Initialization with $(u^{(0)},G^{(0)})$ and $(w^{(1)},G^{(1)})$.\\[5pt]
	Compute iteratively $u^{(n)}$ ($n\geq 2$) with the Nesterov steps.\\
	While $|| G(u^{(n)},w,\hat{y})||_{\mathcal{U}} > 1.e^{-10}$, do gradient steps.

	\item[End:]
	Obtain $u$, approximated solution of~\eqref{pbstandard} with $(\hat{y},r) = (\hat{y},r(w,\cdot))$.
	\end{description}
	\caption{Solving the inner-problem for a given regularizer $u\mapsto r(w,u)$ via first-order necessary optimality conditions.}\label{algo-inner}
\end{algorithm}
\FloatBarrier 

Note that for evaluating the gradient~\eqref{super-gradient} of the outer-problem, Algorithm~\ref{algo-inner} needs to be performed $K$ times, since we need to determine each $u_k$ corresponding to $\hat{y}_k$. For a given~$w\in \mathcal{W}$, we denote the evaluation of functional of Problem~\eqref{pbmainrelaxed} as follows
\begin{equation*}
\mathcal{J}(w) := \frac{1}{2K} \sum_{k=1}^K \| u_k - \hat{u}_k \|_{\mathcal{U}}^2
+\frac{\nu}{2} \|w\|^2_{\mathcal{W}},
\end{equation*}
where the $u_k$ are obtained with Algorithm~\ref{algo-inner}.

\newpage
\subsection{Barzilai-Borwein gradient rule for the relaxed outer-problem}
We solve the outer-problem with the Barzilai-Borwein algorithm~\cite{BarBor}. The corresponding method uses Theorem~\ref{theorem-main}, and is explained in Algorithm~\ref{algo-outer} below, in the case $\nu = 0$.
 
\begin{algorithm}[htpb]
\hfill 
	\begin{description}
		\item[Initialization:] \hfill
		\begin{description}
			\item[-] Load/create the data set $\{(\hat{y}_k,\hat{u}_k); 1\leq k \leq K\}$.
			\item[-] Choose random weights as initial weights $w^{(0)}$ for the neural network.
		\end{description}

		\item[Initial gradient:] From $w^{(0)} \in \mathcal{W}$, compute the (initial) gradient. For each $1\leq k \leq K$:\\[5pt]
		\begin{tabular} {lr}
		$\left.\begin{minipage}{11.5cm}
		\begin{description}
			\item[-] With Alogirthm~\ref{algo-inner}, find the approximated solution $u_k$ of the inner-problem~\eqref{pbstandard} corresponding to $(\hat{y},r) = (\hat{y}_k, r(w^{(0)},\cdot))$.
			\item[-] Compute the multipliers $\mu_k$ solutions of~\eqref{eqadjouter-super}:\\ 
$G'_u(u_k,w^{(0)},\hat{y}_k)^{\ast}.\mu_k = -(u_k-\hat{u}_k)$.
			\item[-] Compute the gradient $\mathbf{G}_{\ell}^{(0)}$ using formula~\eqref{super-gradient}:\\
$\mathbf{G}_{\ell}^{(0)}= \displaystyle \frac{1}{K}\sum_{k=1}^K
G'_{w_{\ell}}(u_k,w^{(0)},\hat{y}_k)^{\ast}.\mu_k$
		\end{description}
		\end{minipage} 
		\right\}$ & 
		\begin{minipage}{7cm}
		Compute the gradient  \\ in 3 steps
		\end{minipage}
		\end{tabular}
		\hfill \\Store $w^{(0)}$ and $\mathbf{G}^{(0)} = (\mathbf{G}_{1}^{(0)}, \dots, \mathbf{G}_{L}^{(0)})$.

		\item[Armijo rule:] Choose $\beta = 0.5$. 
		\begin{description}
			\item[-] Find the smallest $n\in\N$ such that $\mathcal{J}(w^{(0)}-\beta^n \mathbf{G}^{(0)}) < \mathcal{J}(w^{(0)})$.
			\item[-] Define $w^{(1)} = w^{(0)} - \beta^n \mathbf{G}^{(0)}$. 
			\item[-] Compute the gradient $\mathbf{G}^{(1)}$ as above, corresponding to $w^{(1)}$.
			\item[-] Store $w^{(1)}$ and $\mathbf{G}^{(1)}$.
		\end{description}

	\item[Barzilai-Borwein steps:] Initialization with $(w^{(0)},\mathbf{G}^{(0)})$ and $(w^{(1)},\mathbf{G}^{(1)})$.\\[5pt]
	Compute iteratively $w^{(n)}$ ($n\geq 2$) with the Barzilai-Borwein steps.\\
	While $\displaystyle || \mathbf{G}(w^{(n)})||_{\mathcal{W}} > 1.e^{-8}$, do gradient steps.

	\item[End:]
	Obtain $w \in \mathcal{W}$, approximated solution of~\eqref{pbmainrelaxed}.
	\end{description}
	\caption{Solving the first-order optimality conditions for the outer-problem.}\label{algo-outer}
\end{algorithm}
\FloatBarrier

The Barzilai-Borwein is not a monotonous method, but presents second-order convergence behavior, as the expressions of its gradient steps can also be obtained via a quasi-Newton method, more precisely a BFGS method (see~\cite[Chapter~6]{Nocedal} for more details).

\newpage
\section{Illustration: Identification of coefficients in an elliptic equation} \label{sec-num}

In this section we illustrate the relaxation approach previously explained by considering the example of coefficient identification in an elliptic partial differential equations.

\subsection{Presentation of the problem} 

We consider the elliptic equation identification problem in a smooth bounded domain $\Omega$ of $\R^d$ ($d\geq 1$). Denote by $\L^{\infty}_F(\Omega)$ any finite-dimensional subspace of $\L^{\infty}(\Omega)$, with $\dim(\L^{\infty}_F(\Omega)) = n_{\mathrm{in}}$. For instance, $\L^{\infty}_F(\Omega)$ can be generated by piecewise constant functions on a given grid of $\Omega$. Define for $M>m>0$ the following set:
\begin{equation}
\mathcal{U}_{m,M} = \left\{u\in \L_F^{\infty}(\Omega)\mid  m\leq u(x) \leq M, \text{ for a.e. }x\in \Omega \right\}. \label{def-UmM}
\end{equation}
The parameter-to-state mapping is given by $\mathbb{S}: \mathcal{U}:= \mathcal{U}_{m,M} \ni u \mapsto y \in \mathcal{Y}:= \H^1(\Omega)$, where $y$ satisfies the following system in $\mathcal{Q}' = \H^{-1}(\Omega) \times \H^{1/2}(\p \Omega)$, with $\mathcal{Q} := \H^1_0(\Omega) \times \H^{-1/2}(\p \Omega)$:
\begin{equation}
\left\{ \begin{array} {rl}
-\divg(u\nabla y) = f & \text{in } \Omega, \\[5pt]
 y_{|\p \Omega} = g &  \text{on } \p \Omega.
\end{array} \right.
\label{sys-a}
\end{equation}
In system~\eqref{sys-a} the right-hand-sides $f$ and $g$ are given. Standard results related to system~\eqref{sys-a} are presented in Appendix~\ref{appendix2}.

\subsubsection{On the inner-problem}
Given $w\in \mathcal{W}$, as inner-problem~\eqref{pbstandard} we consider the following one:
\begin{equation} \label{pbstandard-calderon} \tag{$\ast$}
\left\{ \begin{array} {l}
\displaystyle \min_{(y,u)\in \mathcal{Y} \times \mathcal{U}} \quad
\frac{1}{2} \| y-\hat{y} \|_{\L^2(\Omega)}^2 + r(w,u)
, \\[15pt]
\text{subject to~\eqref{sys-a}.}
\end{array} \right.
\end{equation}
Here~\eqref{sys-a} corresponds formally to $\varphi(y,u) := (
-\divg(u\nabla y) - f, \ y_{|\p \Omega} - g) \in \mathcal{Q}'$. Note that we have chosen $\gamma \equiv \Id$, and thus necessarily $n_{\mathrm{out}} = 1$. Following Proposition~\ref{prop-optcond-inner-adj}, the gradient for problem~\eqref{pbstandard-calderon} is given by
\begin{equation*}
G(u,w,\hat{y}).\tilde{u} = r'_u(w,u).\tilde{u} + \int_{\Omega} \tilde{u} \nabla y \cdot \nabla p \, \d \Omega,
\end{equation*}
and the adjoint state $\left( p, q=\displaystyle u \frac{\p p}{\p n} \right)\in \mathcal{Q} =  \H^1_0(\Omega) \times \H^{-1/2}(\p \Omega)$ is given as the solution of system~\eqref{sysadj}, which corresponds to the following one:
\begin{equation}
\left\{ \begin{array} {rl}
-\divg(u\nabla p) = -(y-\hat{y}) & \text{in } \Omega, \\[5pt]
 p_{|\p \Omega} = 0 &  \text{on } \p \Omega.
\end{array} \right.
\label{sys-adj}
\end{equation}
Given the definition of~$\mathcal{U}$ in~\eqref{def-UmM}, proving existence and uniqueness of solutions for systems~\eqref{sys-a}-\eqref{sys-adj} is classical (see section~\ref{appendix2} for the analysis).

\subsubsection{On the relaxed outer-problem} \label{sec-num2}
The coefficient $\nu \geq$ was introduced for theoretical purpose. In the numerical experiments we take~$\nu =0$. The relaxed outer-problem~\eqref{pbmainrelaxed} is formulated as follows:
\begin{equation} \label{pbmainrelaxed-calderon} \tag{$\tilde{\ast\ast}$}
\left\{ \begin{array} {l}
\displaystyle \min_{w\in \mathcal{W}} \quad
\frac{1}{2K} \sum_{k=1}^K \| u_k - \hat{u}_k \|_{\L^{\infty}_F(\Omega)}^2
, \\[15pt]
\text{subject to: $G(u_k,w,\hat{y}_k) = 0$ for all $1\leq k\leq K$.}
\end{array} \right.
\end{equation}
We equip~$\L^{\infty}_F(\Omega)$ with the Euclidean norm of $\R^{n_{\mathrm{in}}}$. In order to apply Theorem~\ref{theorem-main} for this particular example, one need to verify that the theoretical assumptions~$(\mathbf{A1})$--$(\mathbf{A3})$ are satisfied. This is done in the Appendix. Assumption~$(\mathbf{A4})$ is satisfied, since $c(y,\hat{y}) = \frac{1}{2} \|y-\hat{y}\|_{\L^2(\Omega)}^2$. Assumption~$(\mathbf{A5})$ is satisfied, as the activation functions chosen for the numerical experiments are $\tanh$ and $\tan^{-1}$, which are of class~$C^2$. We now focus on the numerical results when solving Problem~\eqref{pbmainrelaxed-calderon}, following the method presented in section~\ref{sec-algo}.

\subsubsection{Data generation} In what follows, we consider $\Omega = (0,1)$ (and thus $d=1$), and discretize the different elliptic equations with P1-finite elements on a uniform mesh made of 100 subdivisions. The parameters~$u$ are piecewise constant. Data will be generated with different values of parameters~$u$, choosing as right-hand-sides
\begin{equation*}
f(x) = -u\left((2-\pi^2x^2)\sin(\pi x) +4\pi x\cos(\pi x) \right), \quad
g(x) = 0,
\end{equation*}
such that implicitly the corresponding exact solution is~$y_{\mathrm{ex}}(x) = x^2\sin(\pi x)$.

\subsection{Rediscovering the $L^2$-norm, or not}
Data $\{(\hat{y}_k,\hat{u}_k); 1\leq k \leq K\}$ are generated artificially by solving the inner-problems corresponding to the $L^2$ regularizer $u\mapsto \frac{3}{2}\|u\|_{\mathcal{U}}^2$, with $\mathcal{U} = \R$.  More precisely, given set of coefficient~$u_{\mathrm{truth}}$, the associated states~$y_{\mathrm{truth}} =:\hat{y}_k$ are computed by solving system~\eqref{sys-a}, and next Problem~\eqref{pbstandard-calderon} with~$r(w,u)$ replaced by $u\mapsto \frac{3}{2}\|u\|_{\mathcal{U}}^2$ as regularizer is solved with Algorithm~\ref{algo-inner} that computes the optimal parameters~$\hat{u}_k$.

Next a feed-forward neural network is trained with this data set, using Algorithm~\ref{algo-outer} presented in section~\ref{sec-algo}. The purpose of this test is to compare the trained neural network with the $L^2$ function that represents the ground-truth regularizer to (re-)discover. We designed a feedforward neural network with~$16$~layers, with uniform dimension of the hidden layers~$n_{\ell}=1$, and $x\mapsto \tanh(x)$ as activation function. Convergence of the objective function is presented in Figure~\ref{fig-misfitL2}, with values given in Table~\ref{table-L2}. Figure~\ref{fig-graphL2} represents the graph of the trained ANN compared with the one of the regularizer with which we generated the artificial data, namely $u \mapsto \frac{3}{2}| u|_{\R}^2$.

\begin{minipage}{\linewidth}
	\hspace{-0.00\linewidth}%
	\centering
	\begin{minipage}{0.48\linewidth}
		\begin{figure}[H]
			\includegraphics[trim = 0cm 0cm 0cm 0cm, clip, scale=0.32]{./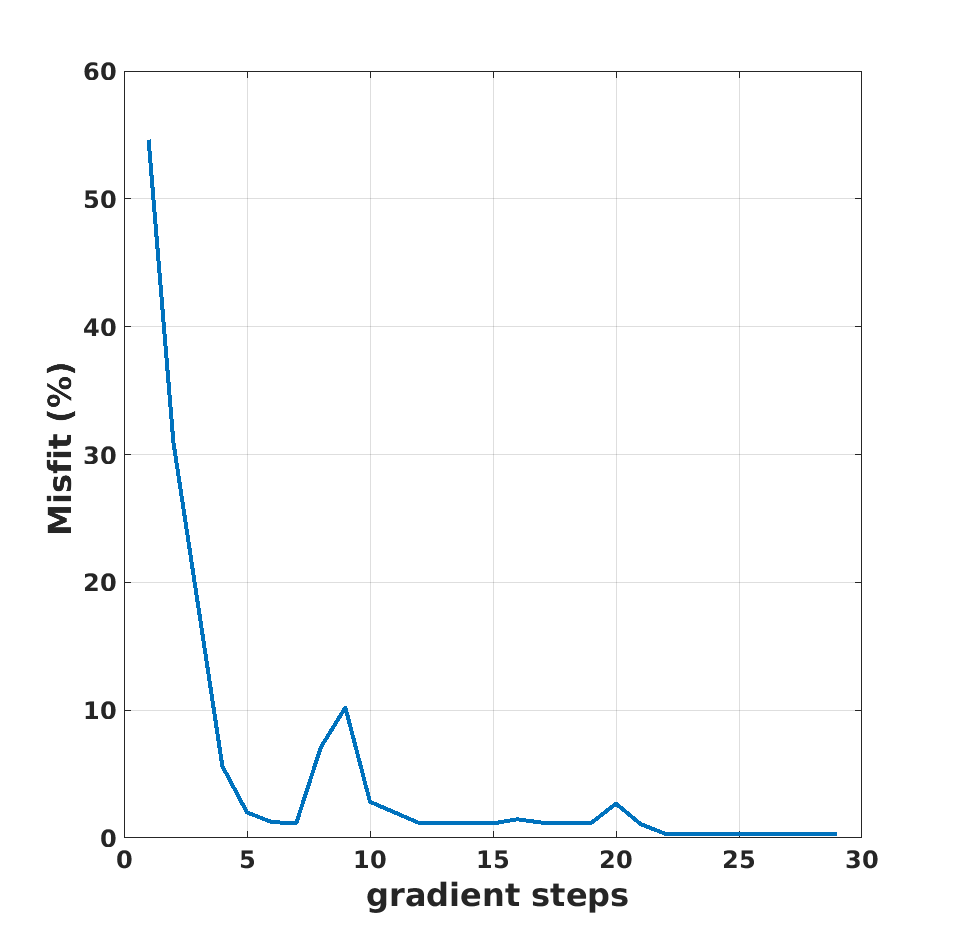}
			\begin{center} linear scale \end{center}
		\end{figure}
	\end{minipage}
	\hspace{0.00\linewidth}
	\begin{minipage}{0.48\linewidth}
		\begin{figure}[H]
			\includegraphics[trim = 0cm 0cm 0cm 0cm, clip, scale=0.32]{./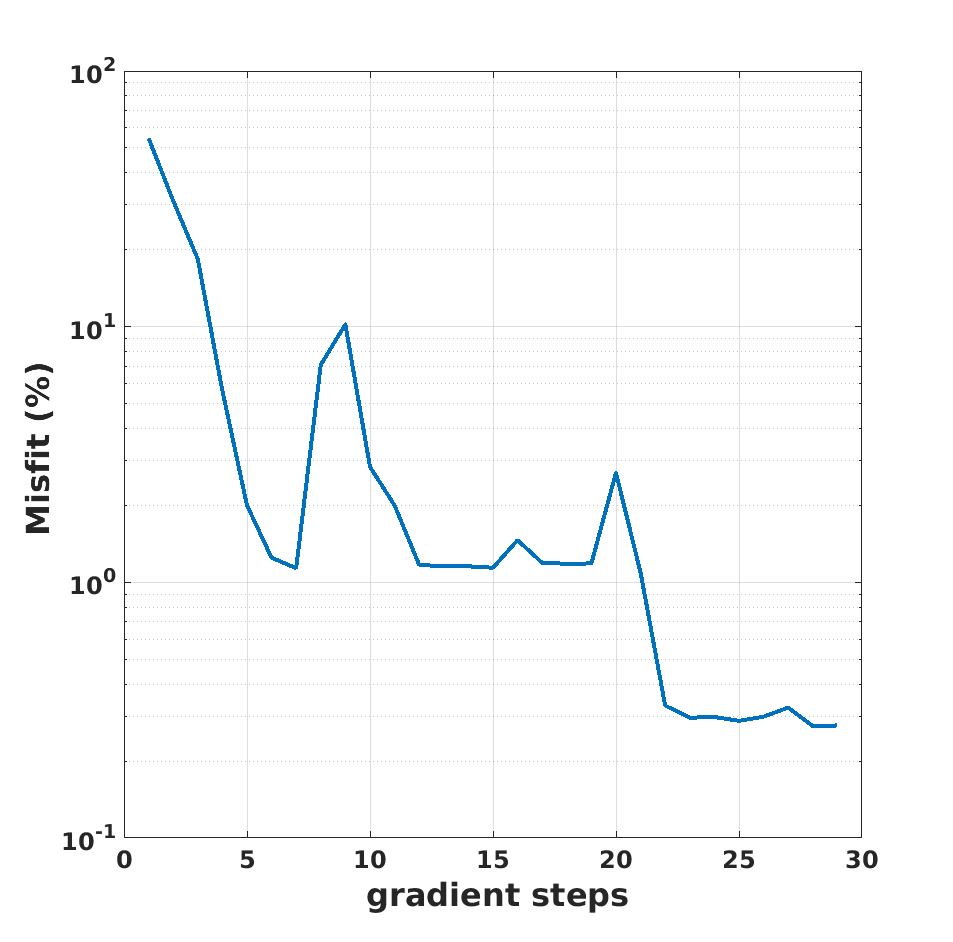}
			\begin{center} log scale \end{center}
		\end{figure}
	\end{minipage}
\\
\vspace*{-15pt}
\begin{figure}[H]
\caption{Misfit evolution during gradient steps for training a neural network from data generated with a $\L^2$-regularizer, in linear scale (left) and log scale (right). \label{fig-misfitL2}}
\end{figure}
\end{minipage}\\

\begin{minipage}{\linewidth}
\begin{figure}[H]
\includegraphics[trim = 0cm 0cm 0cm 0cm, clip, scale=0.32]{./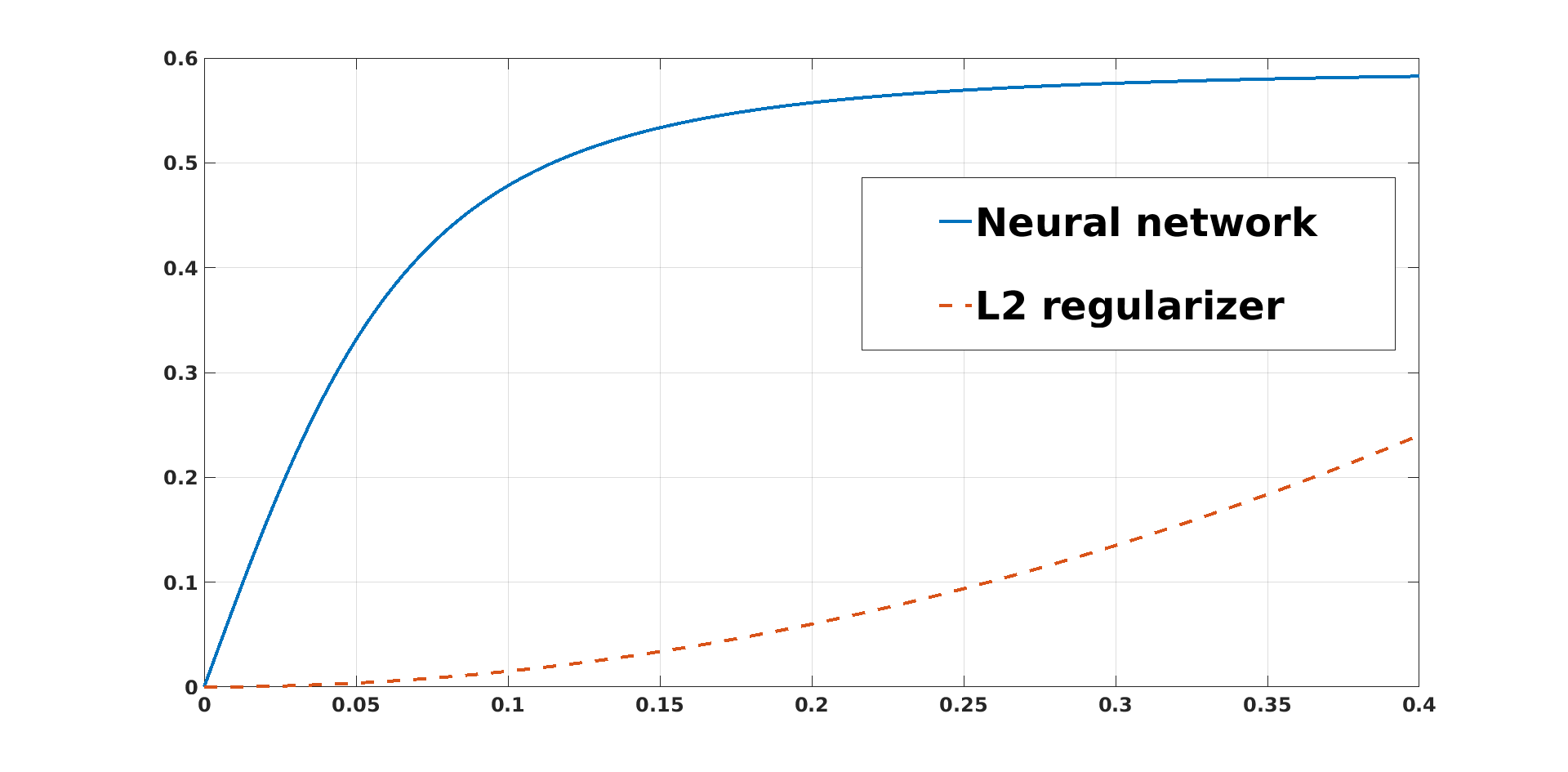}
\end{figure}
\vspace*{-35pt}
\begin{figure}[H]
\caption{Graph of the trained neural network in blue, and of the ground-truth $L^2$ regularizer in red. (For interpretation of the references to color in this figure legend, the reader is referred to the digital version of this article.) \label{fig-graphL2}}
\end{figure}
\end{minipage}\\

In Figure~\ref{fig-misfitL2} and Table~\ref{table-L2} we see that the relative misfit decreased from $54$\% to $0.27$\% after a few iterations. The curves of Figure~\ref{fig-graphL2} suggest non-uniqueness for Problem~\eqref{pbmainrelaxed}, as the residual misfit is of the same range as the approximation error made when discretizing~\eqref{sys-a}. Therefore the trained neural network does the same job as the original $\L^2$-regularizer, but it is clearly different. We note that increasing the number of layers (more specifically with~$L=32$ or~$L=64$) does not change the result, and in particular does not improve the approximation quality.

\begin{table}
\begin{center}
\begin{eqnarray*}
& \begin{array} {|c|c|c|c|c|c|c|c|c|c|c|c|c|c|c|c|}
	\hline
	\text{steps} & 1 & 2 & 3 & 4 & 5 & 6 & 7 & 8 & 9 & 10 & 11 & 12 & 13 & 14 & 15  \\
	\hline 
	\text{misfit (\%)} & 54.6  & 31.1 &    18.4 &    5.62 &    2.00 &    1.25 &    1.14 &    7.09 &    10.2 &    2.83 &    2.00 &    1.17 &    1.16 &    1.15 &    1.14 \\
	\hline
\end{array} & \\
& \hspace*{-28pt} \begin{array} {|c|c|c|c|c|c|c|c|c|c|c|c|c|c|c|}
	\hline 
	\text{steps} & 16 & 17 & 18 & 19 & 20 & 21 & 22 & 23 & 24 & 25 & 26 & 27 & 28 & 29  \\
	\hline
	\text{misfit (\%)} & 1.46 &    1.19 &    1.18 &    1.19 &    2.68 &    1.09 &    0.33 &    0.30 &    0.30 &    0.29 &    0.30 &    0.32 &    0.27 &    0.27  \\
    \hline
\end{array} &
\end{eqnarray*}
\vspace*{-10pt}
		\caption{Misfit vales (in \%) through the different gradient steps.}
		\label{table-L2}
\end{center}
\end{table}

\subsection{Regularizers for compensating noise on the data}

Given a set of $K=10$ piecewise constant coefficients $u_{\mathrm{true}} = (u_{\mathrm{true},k})_{1\leq k \leq K}$, we compute the corresponding states~$y_{\mathrm{true}} = (y_{\mathrm{true},k})_{1\leq k \leq K}$ solutions of~\eqref{sys-a}. We then introduce artificially a noise following a Gaussian distribution that transforms each $y_{\mathrm{true},k}$ into $y_{\mathrm{noisy},k}$. 
The pairs $\{(\hat{y}_k,\hat{u}_k) := 
( y_{\mathrm{noisy},k}, u_{\mathrm{true},k}), 1\leq k \leq K \}$ constitute our data set on which we will train the neural network as regularizer for the inner-problem. The aim of such a regularizer consists in compensating the effects of the noise. Problem~\eqref{pbstandard} equipped with the so-trained regularizer will then enable us to determine the desired coefficients $u$, in spite of the noisy state $\hat{y}$ on which we can rely only.

\begin{minipage}{\linewidth}
	\hspace{-0.00\linewidth}%
	\centering
	\begin{minipage}{0.48\linewidth}
		\begin{figure}[H]
			\includegraphics[trim = 0cm 0cm 0cm 0cm, clip, scale=0.32]{./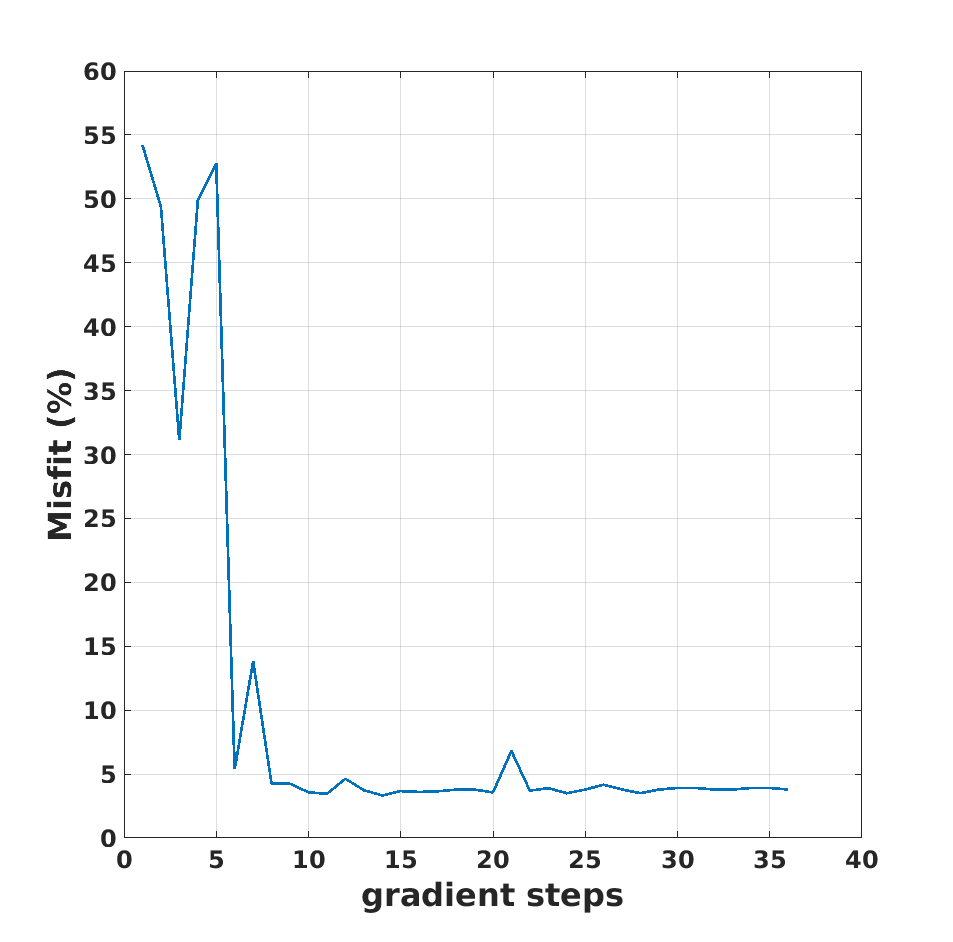}
			\begin{center}\begin{small} linear scale \end{small}\end{center}
		\end{figure}
	\end{minipage}
	\hspace{0.00\linewidth}
	\begin{minipage}{0.48\linewidth}
		\begin{figure}[H]
			\includegraphics[trim = 0cm 0cm 0cm 0cm, clip, scale=0.32]{./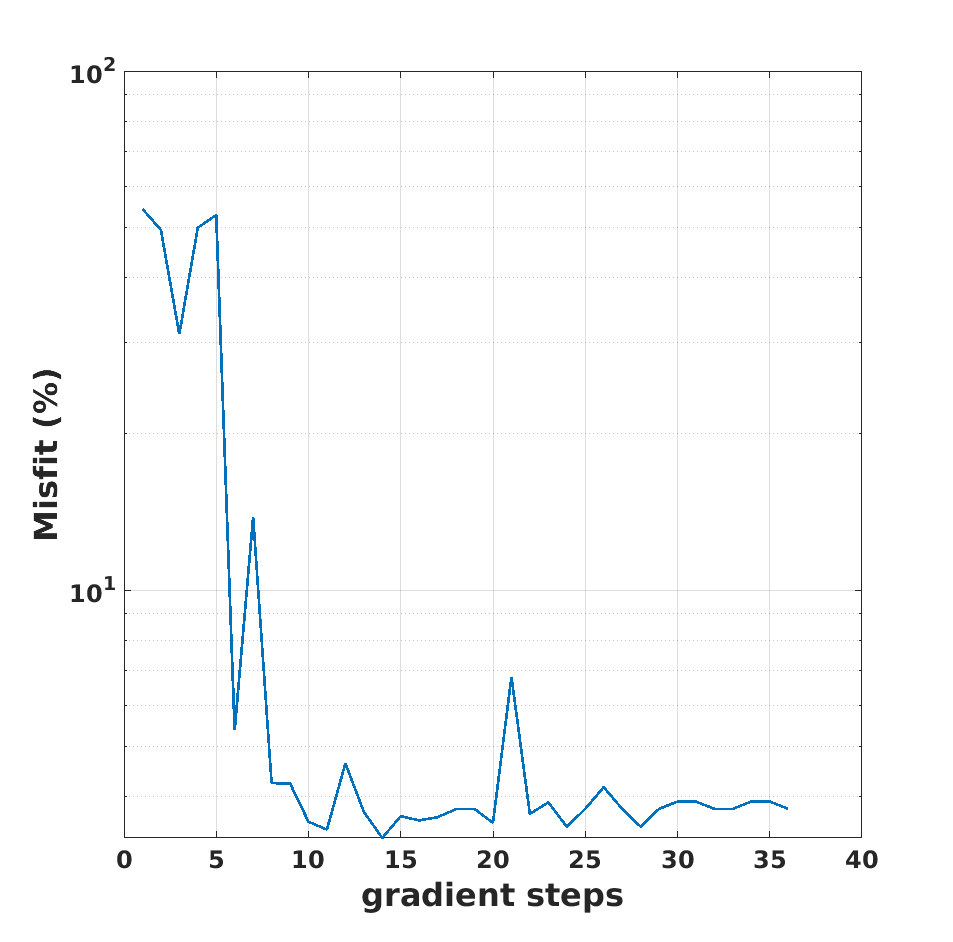}
			\begin{center}\begin{small} log scale \end{small}\end{center}
		\end{figure}
	\end{minipage}
\\
\vspace*{-15pt}
\begin{figure}[H]
\caption{Misfit evolution during gradient steps for training a noise-compensating neural network, in linear scale (left) and log scale (right). \label{fig-unnoise}}
\end{figure}
\end{minipage}\\
\FloatBarrier

\begin{table}
\begin{center}
\begin{scriptsize}
\begin{eqnarray*}
& \begin{array} {|c|c|c|c|c|c|c|c|c|c|c|c|c|c|c|c|c|c|c|}
	\hline
	\text{steps} & 1 & 2 & 3 & 4 & 5 & 6 & 7 & 8 & 9 & 10 & 11 & 12 & 13 & 14 & 15 & 16 & 17 & 18 \\
	\hline 
	\text{misfit (\%)} & 54.2 &   49.4 &   31.1 &   50.0 &   52.8 &   5.37 &   13.9 &   4.25 &   4.24 &   3.58 &   3.45 &   4.63 &   3.74 &   3.33 &   3.67 &   3.60 &   3.65 &   3.78  \\
	\hline
\end{array} & \\
&  \begin{array} {|c|c|c|c|c|c|c|c|c|c|c|c|c|c|c|c|c|c|c|}
	\hline 
	\text{steps} & 19 & 20 & 21 & 22 & 23 & 24 & 25 & 26 & 27 & 28 & 29 & 30 & 31 & 32 & 33 & 34 & 35 & 36  \\
	\hline
	\text{misfit (\%)} &   3.79 &   3.56 &   6.80  &  3.70 &   3.90 &   3.50 &   3.79 &   4.17 &   3.79 &   3.50 &   3.79 &   3.91 &   3.91 &   3.79 &   3.79 &   3.91 &   3.91 &   3.79  \\
    \hline
\end{array} &
\end{eqnarray*}
\end{scriptsize}
\vspace*{-10pt}
		\caption{Misfit vales (in \%) through the different gradient steps.}
		\label{table-unnoise}
\end{center}
\end{table}
\FloatBarrier

\noindent In the illustration presented in Figure~\ref{fig-unnoise} and Table~\ref{table-unnoise}, the initial misfit -- corresponding to a neural network close to zero -- is about 54\%. We trained a feedforward neural network with~$L=8$~layers, and $x\mapsto \tan^{-1}(x)$ as activation function. We see that only few iterations were needed for obtaining a misfit lower than 5\%. The type of neural network we consider does not allow us to reduce the misfit below 3,32\%. Again, increasing the number of hidden layers did not help to improve the result.



\section{Conclusion} \label{sec-conclusion}
We have proposed a method for addressing in practice the data-driven design of regularizers for improving a class of inverse problems formulated as optimal control problems. The class of regularizers is restricted to a set of artificial neural networks, which goes in the sense of the purely nonlinear nature of such a problem. Our approach is bi-level, and based on a relaxation exploiting the necessary optimality conditions for these problems. We saw that such a type of approach enables practical and efficient implementation, providing that the different objective functions and activation functions of the neural network are smooth. Developing a similar approach when the involved quantities are less smooth could present an interesting challenge. Further, investigating applicability of our approach for more complex problems (like image segmentation for example) would demonstrate even more its relevance. In such a context, one may need to take into account neural networks architectures that allow more sophisticated realizations. Finally, noticing that our approach necessitates the evaluation of second-order derivatives of the parameter-to-state mapping for the inner-problem, developing higher order adjoint-based methods for deriving the optimality conditions for the outer-problem could potentially improve the implementation and the applicability to large-scale tasks.

\appendix

\section{Appendix}

In this section we deal with the example considered in section~\ref{sec-num}, namely the problem of identifying a conductivity represented by the variable $u$ in the following elliptic partial differential equation:
\begin{eqnarray}
\left\{ \begin{array}{rl}
-\divg(u\nabla y) = f & \text{in } \Omega, \\
y_{|\p \Omega} = g & \text{on } \p \Omega.
\end{array} \right. \label{sys-a-app}
\end{eqnarray}
Here we set $\mathcal{Y} = \H^1(\Omega)$, $\mathcal{U} = \L^{\infty}(\Omega)$, $\mathcal{Q} = \H^1_0(\Omega) \times \H^{-1/2}(\p \Omega)$ and we recall that  $\mathcal{Q}' = \H^{-1}(\Omega) \times \H^{1/2}(\p \Omega)$. Right-hand-sides are given by $f\in \H^{-1}(\Omega)$ and $g\in \H^{1/2}(\p \Omega)$. In that case, we have
\begin{equation*}
\varphi(y,u) = (-\divg(u\nabla y) -f,\ y_{|\p \Omega}-g) \in \mathcal{Q}'.
\end{equation*}
It is clear that~$\varphi$ is of class~$C^2$ over~$\mathcal{Y} \times \mathcal{U}$, and therefore Assumption~$(\mathbf{A1})$ is satisfied. We have $\varphi'_y(y,u).\tilde{y} =  (-\divg(u\nabla \tilde{y}) ,\ \tilde{y}_{|\p \Omega})$, and it is well-known that the linear mapping~$\varphi'_y(y,u)$ is surjective from~$\mathcal{Y}\times\mathcal{U}$ onto~$\mathcal{Q}'$ (see section~\ref{appendix2} for more details). Therefore Assumption~$(\mathbf{A2})$ is satisfied too. We prove in Lemma~\ref{lemma-A3-app} that Assumption~$(\mathbf{A3})$ is satisfied too. We have already explained in section~\ref{sec-num2} that Assumptions $(\mathbf{A4})$-$(\mathbf{A5})$ are satisfied.

\subsection{Results related to the inverse conductivity problem} \label{appendix2}

Recall that $\H^{-1}(\Omega) = \left(\H^1_0(\Omega)\right)'$, and that $0<m<M$ defines~$\mathcal{U}_{m,M}$ as in~\eqref{def-UmM}. It is well-known that for all $u\in \mathcal{U}_{m,M}$, $f\in \H^{-1}(\Omega)$ and $g \in \H^{1/2}(\p \Omega)$ system~\eqref{sys-a-app} admits a unique solution $y \in \H^1(\Omega)$. This is a consequence of the Lax-Milgram theorem. Moreover, there exists a constant $C>0$ depending only on $\Omega$ such that the following estimate holds:
\begin{equation}
\|y\|_{\H^1(\Omega)} \leq \frac{1}{m}\left(\|f\|_{\H^{-1}(\Omega)} + M\|g\|_{\H^{1/2}(\p \Omega)}\right).
\label{estellip}
\end{equation}
Furthermore, the following result show that the parameter-to-state mapping $\mathbb{S}$ is Lipschitz:
\begin{lemma}
Let be $M>m>0$ and assume that $u_1, u_2 \in \mathcal{U}_{m,M}$. Then the solutions $y_1$ and $y_2$ of system~\eqref{sys-a} corresponding to $u =u_1$ and $u=u_2$ respectively satisfy
\begin{equation}
\|y_1-y_2\|_{\H_0^1(\Omega)} \leq \frac{1}{m^2} \|u_1-u_2\|_{\L^{\infty}(\Omega)}
\left(\|f\|_{\H^{-1}(\Omega)} + M\|g\|_{\H^{1/2}(\p \Omega)} \right), \label{lipest}
\end{equation}
where we set $\|y\|_{\H_0^1(\Omega)}:= \|\nabla y\|_{\L^2(\Omega)}$.
\end{lemma}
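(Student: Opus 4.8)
The plan is to exploit the structural fact that $y_1-y_2$ carries homogeneous boundary data, which makes it an admissible test function, and then to combine the coercivity coming from the lower bound $u_1\geq m$ with the a priori estimate~\eqref{estellip}. First I would write the weak formulations of~\eqref{sys-a} associated with $(u_1,y_1)$ and $(u_2,y_2)$ against an arbitrary test function $v\in\H^1_0(\Omega)$, and subtract them. Since $y_1$ and $y_2$ satisfy the \emph{same} Dirichlet datum $g$ on $\p\Omega$, their difference $y_1-y_2$ lies in $\H^1_0(\Omega)$; this is the key observation that legitimizes the subsequent energy estimate.

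The next step is the algebraic splitting
\[
u_1\nabla y_1-u_2\nabla y_2 = u_1\nabla(y_1-y_2)+(u_1-u_2)\nabla y_2,
\]
which isolates the state difference $y_1-y_2$ in the principal part and relegates the coefficient difference $u_1-u_2$ to a source term. Testing the resulting identity with $v=y_1-y_2$ then yields
\[
\int_{\Omega} u_1\,|\nabla(y_1-y_2)|^2\,\d\Omega
= -\int_{\Omega} (u_1-u_2)\,\nabla y_2\cdot\nabla(y_1-y_2)\,\d\Omega.
\]

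I would bound the left-hand side from below by $m\,\|\nabla(y_1-y_2)\|_{\L^2(\Omega)}^2$ using $u_1\geq m$, and the right-hand side from above by $\|u_1-u_2\|_{\L^\infty(\Omega)}\,\|\nabla y_2\|_{\L^2(\Omega)}\,\|\nabla(y_1-y_2)\|_{\L^2(\Omega)}$ via Cauchy--Schwarz together with the pointwise bound $|u_1-u_2|\leq\|u_1-u_2\|_{\L^\infty(\Omega)}$. Cancelling one factor of $\|\nabla(y_1-y_2)\|_{\L^2(\Omega)}$ (the case where it vanishes being trivial) gives
\[
\|y_1-y_2\|_{\H^1_0(\Omega)}\leq \frac{1}{m}\,\|u_1-u_2\|_{\L^\infty(\Omega)}\,\|\nabla y_2\|_{\L^2(\Omega)}.
\]

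Finally I would absorb $\|\nabla y_2\|_{\L^2(\Omega)}\leq\|y_2\|_{\H^1(\Omega)}$ and invoke the a priori estimate~\eqref{estellip} for $y_2$ to obtain $\|\nabla y_2\|_{\L^2(\Omega)}\leq \frac{1}{m}\big(\|f\|_{\H^{-1}(\Omega)}+M\|g\|_{\H^{1/2}(\p\Omega)}\big)$, whence the announced bound with the prefactor $1/m^2$. There is no genuine obstacle here, the argument being entirely standard; the only points requiring a little care are the splitting step (ensuring the difference of states is the correct quantity to test against) and the remark that~\eqref{estellip} remains applicable to $y_2$ despite $y_2\notin\H^1_0(\Omega)$, since that estimate is stated for the full $\H^1$-norm.
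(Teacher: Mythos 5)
Your proof is correct and follows essentially the same route as the paper: subtract the two equations, observe that $y_1-y_2\in\H^1_0(\Omega)$ and test with it, use coercivity from $u_1\geq m$ together with Cauchy--Schwarz, cancel one factor of $\|\nabla(y_1-y_2)\|_{\L^2(\Omega)}$, and finish by applying the a priori estimate~\eqref{estellip} to $y_2$. The only differences are cosmetic (your splitting carries $(u_1-u_2)$ where the paper writes $(u_2-u_1)$), and your version is in fact slightly more careful, since the paper's intermediate display misprints $|y_1-y_2|^2$ where $|\nabla(y_1-y_2)|^2$ is meant.
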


\begin{proof}
Substituting the first equations of~\eqref{sys-a} satisfied by $y_1$ and $y_2$ leads us to
\begin{equation*}
-\divg(u_1\nabla y_1) + \divg(u_2\nabla y_2)=0 \Rightarrow
-\divg(u_1\nabla (y_1-y_2)) + \divg((u_2-u_1)\nabla y_2)=0.
\end{equation*}
Note that $y_1-y_2 \in \H^1_0(\Omega)$. Then, taking the inner product of the last identity above by $y_1-y_2$ and integrating by parts yields
\begin{eqnarray*}
& & \int_{\Omega}u_1 |y_1-y_2|^2\, \d \Omega = \int_{\Omega} (u_2-u_1) \nabla y_2 \cdot \nabla (y_1-y_2) \, \d \Omega \\
&  \Rightarrow & m \|\nabla(y_1-y_2)\|^2_{\L^2(\Omega)} \leq 
\|u_1 - u_2\|_{\L^{\infty}(\Omega)} \|\nabla y_2\|_{\L^2(\Omega)} 
\|\nabla (y_1- y_2)\|_{\L^2(\Omega)}\\
&  \Rightarrow & m \|y_1-y_2\|_{\H^1_0(\Omega)} \leq 
\|u_1 - u_2\|_{\L^{\infty}(\Omega)} \|y_2\|_{\H^1_0(\Omega)}.
\end{eqnarray*}
Using estimate~\eqref{estellip} satisfied by $y_2$, the previous one implies
\begin{equation*}
m \|y_1-y_2\|_{\H^1_0(\Omega)} \leq 
\|u_1 - u_2\|_{\L^{\infty}(\Omega)} \frac{1}{m}\left(\|f\|_{\H^{-1}(\Omega)} + M\|g\|_{\H^{1/2}(\p \Omega)}\right).
\end{equation*}
Thus estimate~\eqref{lipest} follows and completes the proof.
\end{proof}

\subsubsection*{On the inner-problem}
Let be $w\in \mathcal{W}$. Considering $\hat{y} \in \L^2(\Omega)$, the inner-problem~\eqref{pbstandard} is formulated as follows:
\begin{equation}
\left\{ \begin{array} {l}
\displaystyle \min_{(y,u)\in \H^1(\Omega) \times \mathcal{U}_{m,M}} 
\left(
J(y,u,w) = \frac{1}{2}\| y-\hat{y}\|_{\L^2(\Omega)}^2 + \gamma\circ r(w,u) 
\right)\\[10pt]
\text{subject to \eqref{sys-a}.}
\end{array} \right.
\label{pbstandardex}
\end{equation}
The result below establishes existence of minimizers for Problem~\eqref{pbstandardex}, and thus Assumption~$(\mathbf{A3})$ is fulfilled.

\begin{lemma} \label{lemma-A3-app}
Given $w\in \mathcal{W}$, Problem~\eqref{pbstandardex} admits global minimizers.
\end{lemma}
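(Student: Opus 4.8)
The plan is to argue by the direct method of the calculus of variations in its simplest \emph{Weierstrass} form: reduce the problem to the minimization of a continuous functional over a compact set. First I would eliminate the state constraint by invoking the control-to-state mapping. Since for every $u\in\mathcal{U}_{m,M}$ system~\eqref{sys-a} admits a unique solution $y=\mathbb{S}(u)\in\H^1(\Omega)$ (see the beginning of Appendix~\ref{appendix2}, or Proposition~\ref{prop-cts}), Problem~\eqref{pbstandardex} is equivalent to minimizing over $u$ alone the reduced cost
\begin{equation*}
j(u):=\frac{1}{2}\|\mathbb{S}(u)-y_d\|_{\L^2(\Omega)}^2+\gamma\big(r(w,u)\big),\qquad u\in\mathcal{U}_{m,M}.
\end{equation*}

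Next I would check the two ingredients required by the extreme value theorem. For compactness, I observe that $\mathcal{U}_{m,M}$ is a closed and bounded subset of the finite-dimensional space $\L^\infty_F(\Omega)$: the pointwise bounds $m\le u\le M$ force $\|u\|_{\L^\infty(\Omega)}\le M$ (boundedness), and they are preserved under a.e.\ limits, which in a finite-dimensional space are extracted along a subsequence from any norm-convergent sequence (closedness); hence $\mathcal{U}_{m,M}$ is compact. For continuity of $j$, the key point is the continuity of $u\mapsto\mathbb{S}(u)$: the Lipschitz estimate~\eqref{lipest} shows that $\|\mathbb{S}(u_1)-\mathbb{S}(u_2)\|_{\H^1_0(\Omega)}\le C\,\|u_1-u_2\|_{\L^\infty(\Omega)}$, and since $\mathbb{S}(u_1)-\mathbb{S}(u_2)\in\H^1_0(\Omega)$ carries zero boundary trace, the Poincar\'e inequality upgrades this into convergence in $\H^1(\Omega)$, hence in $\L^2(\Omega)$. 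Therefore the first term of $j$ is continuous in $u$. The second term is continuous as well, being the composition of the smooth map $\gamma$ with the neural network $r(w,\cdot)$, which is continuous because $\rho$ is of class $C^2$ by Assumption~$(\mathbf{A5})$ and the $w_\ell$ are affine.

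Finally, a continuous function on a nonempty compact set attains its infimum, so there exists $u^\ast\in\mathcal{U}_{m,M}$ with $j(u^\ast)=\min_{\mathcal{U}_{m,M}}j$; setting $y^\ast=\mathbb{S}(u^\ast)$ yields a global minimizer $(y^\ast,u^\ast)$ of Problem~\eqref{pbstandardex}. I expect the only delicate step to be the continuity of the control-to-state map---more precisely, transferring the $\H^1_0$-seminorm estimate~\eqref{lipest} into genuine $\L^2$-continuity of the state---which is exactly what the preceding Lipschitz lemma delivers; everything else (compactness of the admissible set, continuity of the regularizer, boundedness from below) is routine once this is in hand.
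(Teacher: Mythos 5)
Your proposal is correct, but it takes a genuinely different route from the paper's proof. The paper runs the direct method on the pair $(y,u)$: it takes a minimizing sequence of feasible pairs, uses the box constraints and finite dimensionality of $\L^{\infty}_F(\Omega)$ to extract strong $\L^{\infty}$ convergence of the controls, uses the a priori bound~\eqref{estellip} and Banach--Alaoglu to extract weak $\H^1$ convergence of the states, and then passes to the limit in the weak formulation of~\eqref{sys-a} to show that the limit pair is feasible. You instead eliminate the state variable through the control-to-state map $\mathbb{S}$, observe that $\mathcal{U}_{m,M}$ is compact (closed and bounded in a finite-dimensional space), prove continuity of the reduced cost $j$ via the Lipschitz estimate~\eqref{lipest} upgraded by Poincar\'e's inequality, and conclude by the Weierstrass extreme value theorem. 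Both arguments exploit the same compactness of the admissible set, but they treat the state differently: your version is more elementary (no weak compactness, no limit passage in the PDE) and it actually puts to use the Lipschitz lemma that the paper proves immediately beforehand but never invokes in its own existence proof; it also settles cleanly the convergence of the tracking term $\frac{1}{2}\|y-y_d\|^2_{\L^2(\Omega)}$, a point the paper leaves implicit (weak $\H^1$ convergence of $(y_n)$ requires either weak lower semicontinuity of the norm or Rellich's embedding to handle that term, and the paper's proof stops at feasibility of the limit). What the paper's template buys in exchange is robustness: the minimizing-sequence/weak-convergence scheme is the one that survives when the control space is infinite-dimensional or when $\mathbb{S}$ is not known to be Lipschitz in a norm compatible with the available convergence of controls, and it is the same scheme reused for Theorem~\ref{th-exist-outer}; your argument, by contrast, is tied to the strong structure of this example (finite dimension, box constraints, and the quantitative estimate~\eqref{lipest}).
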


\begin{proof}
Since the function $J$ of Problem~\eqref{pbstandardex} is bounded from below, and since system~\eqref{sys-a} is well-posed, $J$ admits a minimizing sequence~$(y_n,u_n)_n$ of feasible solutions. From the definition of~$\mathcal{U}_{m,M}$, the sequence~$(u_n)_n$ is uniformly bounded in $\L^{\infty}_F(\Omega)$, which is of finite dimension. Therefore, up to extraction, the sequence~$(u_n)_n$ converges strongly in $\L^{\infty}(\Omega)$ towards $u\in \mathcal{U}_{m,M}$. From estimate~\eqref{estellip}, the sequence~$(y_n)_n$ is bounded in~$\H^1(\Omega)$. In virtue of the Banach-Alaoglu theorem, up to extraction it converges weakly towards~$y \in \H^1(\Omega)$. Let us show that~$y$ in solution of system~\eqref{sys-a} corresponding to~$u$. For all $\varphi \in \H^1_0(\Omega)$, using the Green formula, we have
\begin{eqnarray*}
\langle - \divg(u\nabla y) - f ; \varphi \rangle_{\H^{-1}(\Omega),\H^1_0(\Omega)} & = & 
\langle - \divg(u\nabla y) +  \divg(u_n\nabla y_n) ; \varphi \rangle_{\H^{-1}(\Omega),\H^1_0(\Omega)} \\
& = & \langle u\nabla y - u_n\nabla y_n ; \nabla \varphi \rangle_{[\L^2(\Omega)]^d} \\
& = & \langle (u-u_n)\nabla y_n; \nabla \varphi \rangle_{[\L^2(\Omega)]^d}
+ \langle u\nabla(y-y_n); \nabla \varphi \rangle_{[\L^2(\Omega)]^d}\\
& = & \langle (u-u_n)\nabla y_n; \nabla \varphi \rangle_{[\L^2(\Omega)]^d}
- \langle \divg(u\nabla \varphi);y-y_n \rangle_{\H^{-1}(\Omega),\H^1_0(\Omega)} ,
\end{eqnarray*}
where we have used that $y-y_n \in \H^1_0(\Omega)$. This yields
\begin{eqnarray*}
\left|\langle - \divg(u\nabla y) - f ; \varphi \rangle_{\H^{-1}(\Omega),\H^1_0(\Omega)}\right| & \leq & 
\|u-u_n\|_{\L^{\infty}(\Omega)} \|\nabla y_n\|_{[\L^2(\Omega)]^d}
\|\nabla \varphi \|_{[\L^2(\Omega)]^d} \\
& & + 
\left|\langle \divg(u\nabla \varphi) ; y-y_n  \rangle_{\H^{-1}(\Omega),\H^1_0(\Omega)}\right|.
\end{eqnarray*}
Passing to the limit, we deduce that $-\divg(u\nabla y) = f$ in $\H^{-1}(\Omega)$. Passing to the limit in the equality ${y_n}_{|\p \Omega} = g$ is straightforward. Thus $(y,u)$ is a solution of Problem~\eqref{pbstandardex}, which completes the proof.
\end{proof}

\subsection{On the optimal control approach for solving an inverse problem}\label{appendix1}

Let us fix the ANN weights~$w$ for this subsection. A classical approach for solving inverse problems consists in taking into account the equality constraint $\varphi(y,u) = 0$ by incorporating it in the functional to minimize, as follows:
\begin{eqnarray*}
 \min_{u\in \mathcal{U}, y \in \mathcal{Y}} \Big(J(y,u,w) := 
\alpha \| \varphi(y,u) \|_{\mathcal{Q}}^2 
+ c(y,\hat{y}) + \gamma \circ r(w,u)\Big),
\end{eqnarray*}
where $\alpha >0$ is a given parameter (and $w$ too). The contribution of the state equation~\eqref{sys-a-app} in the functional to minimize is
\begin{equation*}
\|\varphi(y,u)\|_{\mathcal{Q}}^2 = 
\| -\divg(u\nabla y) -f \|_{\H^{-1}(\Omega)}^2 
+ \|y_{|\p \Omega} -g\|_{\H^{1/2}(\p \Omega)}^2. 
\end{equation*}
This approach would require differentiation of the data (represented here by~$u$), more specifically the computation of~$\nabla u$, and thus could lead to approximation biases, especially when the set of data is incomplete. Instead of that, we consider the state equation as a constraint, that we impose by introducing a Lagrange multiplier denoted by $(p,q) \in \H^1_0(\Omega) \times \H^{-1/2}(\p \Omega) = \mathcal{Q} \simeq \mathcal{Q}''$, namely the {\it adjoint} variable, leading to the search of a saddle-point of a Lagrangian functional, given as:
\begin{equation*}
\mathscr{L}(y,u,p)  =  c(y,\hat{y})  + \gamma \circ r(w,u) + \langle (p,q)\, , \varphi (y,u) \rangle_{\mathcal{Q},\mathcal{Q}'}.
\end{equation*}
A saddle-point of $\mathscr{L}$ is obtained as a critical point with respect to the variables $(y,u)$ and $(p,q)$. The duality product $\langle (p,q)\, , \varphi (y,u) \rangle_{\mathcal{Q},\mathcal{Q}'}$ corresponds to a weak variational formulation, namely
\begin{eqnarray*}
\langle (p,q)\, , \varphi (y,u) \rangle_{\mathcal{Q},\mathcal{Q}'} & = &
\left\langle p\, , -\divg ( u \nabla y) -f \right\rangle_{\H^1_0(\Omega),\H^{-1}(\Omega)}
+ \left\langle q\, , y_{|\p \Omega} -g \right\rangle_{\H^{1/2}(\p \Omega),\H^{-1/2}(\p \Omega)}
\\
& = &  \int_{\Omega} \nabla p : (u\nabla y) \, \d \Omega
-\left\langle p\, , f \right\rangle_{\H^1_0(\Omega),\H^{-1}(\Omega)}
+ \left\langle q\, , y_{|\p \Omega} -g \right\rangle_{\H^{1/2}(\p \Omega),\H^{-1/2}(\p \Omega)},
\end{eqnarray*}
where we have used the Green formula. Thus we see that the data represented by the function $u$ does not need to be differentiated.



\printbibliography

\end{document}